 \newtheorem{thm}{Theorem}[section]
 \newtheorem{cor}[thm]{Corollary}
 \newtheorem{prop}[thm]{Proposition}
 \theoremstyle{definition}
 \newtheorem{defn}[thm]{Definition}
 \theoremstyle{remark}
 \newtheorem{rem}[thm]{Remark}
 \newtheorem*{ex}{Example}
 \newtheorem*{exs}{Examples}
 \numberwithin{equation}{section}
\newcommand{\bi}{\begin{itemize}}
\newcommand{\ei}{\end{itemize}}
\newcommand{\be}{\begin{enumerate}}
\newcommand{\ee}{\end{enumerate}}
\newcommand{\beq}{\begin{equation}}
\newcommand{\eq}{\end{equation}}
\begin{document}
%
%
%
%
%
%
%
%
%
\title[G\^ateaux differentiability on non-separable Banach spaces]
 {G\^ateaux differentiability on non-separable Banach spaces}
\author[Andrés F. Mu\~noz-Tello]{Andrés F. Mu\~noz-Tello}

\address{
Facultad de Ciencias Basicas\\
Universidad Santiago de Cali\\
Calle 5,\, 62-00, Pampalinda, Santiago de Cali\\
Colombia
}

\email{andres.munoz00@usc.edu.co}


\subjclass[2010]{Primary Clasification 58C20. Secondary Clasification 58D20, 46B22.}

\keywords{G\^ateaux differentiable, Hadamard differentiable, Fréchet differentiable, locally Lipschitz, Radon-Nikodým Property.}

\date{\today}
\dedicatory{Dedicated to the memory of Professor Guillermo Restrepo}
\begin{abstract}
 This paper deals with the extension of a classical theorem  by R. Phelps on the G\^ateaux differentiability of Lipschitz functions on separable Banach spaces  to the non-separable case. The extension of the theorem is not possible for general  non-separable Banach spaces. Therefore, we consider a norm of the non-separable space $\ell^{\infty}(\mathbb{R})$, showing that it complies with the aforementioned theorem thesis. We also consider other examples as $ L^{\infty}(\mathbb{R}) $ and $NBV[a, b]$, showing in all cases their sets of $G$-differentiability and some properties of these sets. All of the above, closely following the assumptions in Phelps Theorem, which will extend in the case separable to functions with rank over the Asplund spaces and in the non-separable case for projective limits and locally-Lipschitz cylindrical functions.
\end{abstract}
\maketitle

\section{Introduction}
The Phelps's Theorem~\cite{ph}, which will be extended in this paper, shows  that any locally-Lipschitz function of a Banach space separable to another one with Radon-Nikodým Property, is G\^ateaux differentiable outside of a Gaussian null set; thus ge\-ne\-ra\-li\-zing Rademacher Theorem ~\cite{ra} of continuous functions on $\mathbb{R}^{n}$.\\

The Phelps's Theorem is not only of interest in functional analysis; in recent articles, this theorem is applied to the study of diffusion equations, elliptic equations in infinite dimension and stochastic control, as does Goodys~\cite{gol}, which uses the Phelps Theorem to make the necessary construction to solve the Hamilton-Jacobi-Bellman equation related to the optimal control of a stochastic semilinear equation over a Hilbert space $H$. On the other hand, this theorem is useful in the geometry of Banach spaces, on convex functions and S\'obolev spaces in infinite dimensional domains, as it is quoted  in Chapter 5 of the book of Evans~\cite{ev}. In addition, having served as a starting point and guide in the study of new variational methods applied to the Fr\'echet differentiability of functions locally Lipschitz as raised by Lindenstrauss {\it et al.}~\cite{lin}. \\

In the case that $X$, $Y$ are two Banach spaces and $\Omega\subseteq X$ an open set, a function $f:\Omega\rightarrow Y$ is {\it G\^ateaux differentiable ($G$-differentiable)} in $a\in\Omega$, if there is a continuous linear function $u$ such that for all $h\in X$ the limit
$$\lim\limits_{t\rightarrow 0^{+}} \frac{f(a+th)-f(a)}{t}=u \left(h\right)$$
exists, with respect to the norm defined in the topology.\\

On normed vector spaces, particularly on Banach spaces $X$, $Y$ a function $f: X \rightarrow Y$ is {\it Fr\'echet differentiable ($F$-differentiable)} in $a\in X$, if there is a continuous linear function $u$ such that
$$f\left(a+h\right)-f\left(a\right)-u\left(h\right)=r\left(h\right)$$ where $\lim\limits_{h\rightarrow 0} \frac{\left\|r\left(h\right)\right\|_{Y}}{\left\| h \right\|_{X}}=0$.\\

In the case of two Banach spaces $X$, $Y$ it is said that $f:X \rightarrow Y$ is {\it Hadamard differentiable ($H$-differentiable)} in $ a\in X $, if and only if there exists $ u\in\mathcal{L}\left(X, Y \right)$ such that
\begin{equation}
u\left(h\right) = \lim\limits_{\left(t,k\right)\rightarrow \left(0^{+}, h\right)}\frac{f\left(a + tk\right) - f\left(a\right)}{t}\ \ \textnormal{for all}\ h\in X.\label{$H$-derivada}\nonumber
\end{equation}
 
In Schirotzek~\cite{Sch}, using the definition and properties of bornologies, it is demonstrated that to be $F$-differentiable implies to be $H$-differentiable and in turn to be $H$-diffe\-ren\-tia\-ble implies to be $G$-differentiable, showing that in spaces of finite dimension, the derivative of Hadamard coincides with the derivative of Fréchet.\\

The following relations and properties will be very useful in the development of this document and their proofs can be found in Proposition 2.51 and Theorem 2.28, from Penots~\cite{pe} and in Coleman Theorem 2.1~\cite{col}.
	
	\begin{prop}\label{b1} Let $E$, $F$ and $G$ Banach spaces, $\Omega\subseteq E$, $\Omega'\subseteq F$ open sets and $f:\Omega \rightarrow F$, $g:\Omega'\rightarrow Z$ are functions such that $f \left(\Omega \right) \subseteq \Omega'$, where $ b = f \left(a \right)$.
	\begin{enumerate}
		\item If $f$ is $G$-differentiable in $ a \in \Omega$, and
		locally Lipschitz in $a\in\Omega $, then $H$ -differentiable and $d_{H} f\left(a\right)=d_{G} f \left (a \right)$.
	
	\item If $f$ is a function continuously $G$-differentiable, then $f$ is $H$-differentiable.
	
	\item If $f$ is $H$-differentiable in $a\in\Omega$ and $g$ is $H$-differentiable in $b \in\Omega'$, then $g\circ f:\Omega\rightarrow Z$ is $H$-differentiable in $a \in \Omega$ and
	\[d_{H}\left(g\circ f\right)\left(a\right) := d_{H} g\left(b\right)\circ d_{H} f\left(a\right).\]
	
	\item If $f$ is $F$-differentiable in $a\in\Omega$ and $g$ is $F$ -differentiable in $b\in\Omega'$, then $g\circ f:\Omega\rightarrow Z$ is $F$-differentiable in $a\in\Omega$ and
	\[d_{F}\left(g\circ f\right)\left(a\right) := d_{F} g\left(b\right)\circ d_{F} f\left(a\right).\]
		\end{enumerate}
	\end{prop}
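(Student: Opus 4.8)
The plan is to treat the four items separately: items (1)--(2) are about \emph{upgrading} weak differentiability, and the chain rules (3)--(4) follow by unwinding the defining limits. For item (1), suppose $f$ is $G$-differentiable at $a$ with $u=d_G f(a)$ and $L$-Lipschitz on some ball $B(a,\delta)$, and fix $h\in E$. For $t>0$ small and $k$ close to $h$ I would split
\[
\frac{f(a+tk)-f(a)}{t}-u(h)=\frac{f(a+tk)-f(a+th)}{t}+\left(\frac{f(a+th)-f(a)}{t}-u(h)\right).
\]
The parenthesized term tends to $0$ as $t\to 0^{+}$ by the definition of $d_G f(a)$, while the first term has norm at most $L\|k-h\|$ by the Lipschitz bound and hence tends to $0$ as $k\to h$; so the Hadamard limit exists and equals $u(h)$, which is exactly $H$-differentiability together with $d_Hf(a)=d_Gf(a)$.

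For item (2) I would first show that a continuously $G$-differentiable $f$ is locally Lipschitz, and then invoke item (1). On a convex neighborhood of $a$ on which $\|d_G f(\cdot)\|\le M$, for each $\varphi\in F^{*}$ the scalar map $t\mapsto\varphi\big(f(x+t(y-x))\big)$ is differentiable with derivative $\varphi\big(d_G f(x+t(y-x))(y-x)\big)$, so the one-variable mean value theorem gives $|\varphi(f(y)-f(x))|\le M\|\varphi\|\,\|y-x\|$, whence $\|f(y)-f(x)\|\le M\|y-x\|$, and the hypotheses of item (1) are met on that neighborhood.

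For item (3), set $b=f(a)$, $v=d_H f(a)$, $w=d_H g(b)$, and define $k(t,s)=t^{-1}\big(f(a+ts)-f(a)\big)$; the $H$-differentiability of $f$ at $a$ says precisely that $k(t,s)\to v(h)$ whenever $(t,s)\to(0^{+},h)$. Then
\[
\frac{(g\circ f)(a+ts)-(g\circ f)(a)}{t}=\frac{g\big(b+t\,k(t,s)\big)-g(b)}{t},
\]
and since $\big(t,k(t,s)\big)\to(0^{+},v(h))$, the $H$-differentiability of $g$ at $b$ forces the right-hand side to converge to $w(v(h))$. The delicate point is this composition of limits: it is exactly here that $H$-differentiability of $f$ (and not merely pointwise $G$-differentiability) is used, and one must check that the net $\big(t,k(t,s)\big)$, indexed by $(t,s)\to(0^{+},h)$, is an admissible net for the Hadamard limit defining $d_H g(b)$.

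For item (4), I would use the standard $o(\cdot)$ bookkeeping. Writing $f(a+h)=b+v(h)+r_1(h)$ and $g(b+k)=g(b)+w(k)+r_2(k)$ with $\|r_1(h)\|=o(\|h\|)$ and $\|r_2(k)\|=o(\|k\|)$, the substitution $k=v(h)+r_1(h)$ yields
\[
(g\circ f)(a+h)-(g\circ f)(a)-(w\circ v)(h)=w\big(r_1(h)\big)+r_2\big(v(h)+r_1(h)\big),
\]
and $\|w(r_1(h))\|\le\|w\|\,\|r_1(h)\|=o(\|h\|)$, while $\|v(h)+r_1(h)\|\le(\|v\|+1)\|h\|$ near $0$ forces $r_2(v(h)+r_1(h))=o(\|h\|)$, so the remainder is $o(\|h\|)$ and $g\circ f$ is $F$-differentiable at $a$ with $d_F(g\circ f)(a)=d_F g(b)\circ d_F f(a)$. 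Of the four, I expect item (3) to be the main obstacle, for the net-substitution reason above; items (1), (2) and (4) are direct estimates.
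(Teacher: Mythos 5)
Your proof is correct. Note that the paper itself gives no argument for this proposition: it simply defers to Proposition 2.51 and Theorem 2.28 of Penot and Theorem 2.1 of Coleman, so there is nothing to compare against except those references, and your arguments are exactly the standard ones found there. In particular, your decomposition in item (1), namely
\[
\frac{f(a+tk)-f(a)}{t}-u(h)=\frac{f(a+tk)-f(a+th)}{t}+\Bigl(\frac{f(a+th)-f(a)}{t}-u(h)\Bigr),
\]
with the Lipschitz bound $L\|k-h\|$ on the first term, is the classical route to $G$ plus Lipschitz implies $H$; the mean value inequality via functionals $\varphi\in F^{*}$ correctly reduces item (2) to item (1); and your treatment of the ``net substitution'' $\bigl(t,k(t,s)\bigr)\to\bigl(0^{+},v(h)\bigr)$ in item (3) is precisely the point that makes the Hadamard chain rule work where the G\^ateaux one fails, so you have identified the right delicate step. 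Item (4) is the standard $o(\|h\|)$ bookkeeping and is fine. The only cosmetic caveat is that, since the paper defines the G\^ateaux and Hadamard limits with $t\to 0^{+}$ only, you should remark (as you implicitly use in item (2)) that linearity of $u$ turns the one-sided limit into a two-sided one, so the scalar maps $t\mapsto\varphi\bigl(f(x+t(y-x))\bigr)$ are genuinely differentiable on the segment.
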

	
On the other hand, for a topological space$(X, \tau)$, a set is called {\it $G_{\delta}$ set} if it can be expressed as a countable intersection of open sets. This type of set is fundamental in the definition of the spaces {\it F and $G$-Asplund}, since these respectively are Banach spaces in which all convex and continuous function defined in an open and convex subset is $F$-differentiable ($G$-differentiable) in a dense and $G_{\delta}$ set. The following are some examples, taken from Phelps~\cite{ph} and Asplund~\cite{as}, of the spaces in question.\\

\begin{exs}\label{a4}\
	\begin{enumerate}
	\item $\mathbb{R}^{n}$ is a $F$-Asplund space.
	
	\item\label{a5} All separable Banach space is a $G$-Asplund space.
	\end{enumerate}
	 \end{exs}
	
Recall that the set of all $F$-Asplund spaces is strictly contained in the set of all $G$-Asplund spaces. In fact, from the previous example $\ell^{1}(\mathbb{R})$ is a $G$-Asplund space and of the demonstrated in Deville~\cite{de}, the usual norm of $\ell^{1}(\mathbb{R})$ is not $F$-differentiable in any point, and therefore $\ell^{1}(\mathbb{R})$ is not a $F$-Asplund space.\\

In addition, $E$ is an $F$-Asplund space if and only if $E'$ has the RNP, where the RNP refers to that $E$ has the {\it Radon-Nikodým Property} with respect to any finite measurement space $(\Omega, \mathcal{F}, \mu)$. That is, if every  $\sigma$-aditive function, of bounded variation and $\mu$-continuous $\alpha:\mathcal{F}\rightarrow E$ admits an integral representation of the form
		$$\alpha(A)=\int_{A}g(\omega)d\mu(\omega), $$
for all $A\in\mathcal{F}$ and $g\in L_{1}(\mu,E)$. The following examples, which appear in the works of Namioka~\cite{na} and Stegall~\cite{st}, illustrate the above. 
		
\begin{exs}\
	\begin{enumerate}
	\item Every finite dimensional Banach space  has the RNP.
	\item The Banach space $c_{0}$ does not have the RNP.
	\item The Banach space $\ell^{1} (\mathbb{K})$ has the RNP.
	\item Every dual and separable Banach space has the RNP.
	\item Every reflexive Banach space has the RNP.
	\end{enumerate}
	\end{exs}

Now, some results will be shown about separable Banach spaces, in particular $\ell^{1}(\mathbb{R})$ and $\mathcal{C}(T)$ in which the Phelps Theorem is used, also demonstrating some extra pro\-per\-ties of the sets of $G$-differentiability of the norms of each space. Later, we will work on norms of $ \ell ^{\infty}(\mathbb{R})$, $L^{\infty}(\mathbb{R})$ and $NBV[a, b]$, demonstrating that in any non-separable Banach space it is not possible to extend the aforementioned theorem. Finishing with an extension of the Phelps Theorem for projective boundaries and cylindrical functions locally Lipschitz.

\section[$G$-differentiable in separable spaces]{$G$-differentiable in separable spaces}
In the Proposition \ref{b1} it was shown that there was a relation between $G$-di\-ffe\-ren\-tia\-ble functions and $H$-differentiable functions via local Lipschitz functions, moreover  that a Banach space is $F$-Asplund if and only if its dual has the RNP. In this sense, first a result will be explained, which shows  that the Phepls Theorem does not only speak about  $G$-differentiability is actually a result that ensures  the $H$-differentiability. The proof only relies on  Proposition ~\ref{b1}. Later a variation of the Phelps Theorem in which the Asplund spaces are involved will be obtained.

\begin{cor}
	Let $X$ be a separable Banach space and $Y$ a Banach space satisfying the RNP. Let $\Omega\subset X$ a open set and non-empty. If $f:\Omega\rightarrow Y$ is locally Lipschitz function. Then $f$ is $H$-differentiable in the complement of a null Gaussian set.
	\end{cor}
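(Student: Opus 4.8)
The plan is to derive this corollary directly from Phelps's Theorem together with part (1) of Proposition~\ref{b1}, so the argument is essentially a ``composition'' of two known facts. First I would invoke the Phelps Theorem in the form stated in the introduction: since $X$ is a separable Banach space and $Y$ has the Radon-Nikod\'ym Property, every locally Lipschitz function $f:\Omega\to Y$ is $G$-differentiable at every point outside some Gaussian null set $N\subset\Omega$. This gives, for each $a\in\Omega\setminus N$, a continuous linear map $u=d_G f(a)$ realizing the one-sided directional limits.

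Next I would upgrade $G$-differentiability to $H$-differentiability pointwise on $\Omega\setminus N$. Fix $a\in\Omega\setminus N$. By hypothesis $f$ is locally Lipschitz, hence locally Lipschitz at $a$; and by the previous step $f$ is $G$-differentiable at $a$. Part (1) of Proposition~\ref{b1} (applied with $E=X$, $F=Y$, and $f$ itself) then asserts precisely that $f$ is $H$-differentiable at $a$ with $d_H f(a)=d_G f(a)$. Since $a\in\Omega\setminus N$ was arbitrary, $f$ is $H$-differentiable at every point of $\Omega\setminus N$, i.e.\ on the complement of the Gaussian null set $N$, which is the assertion.

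There is essentially no obstacle here: the only thing to check is that the hypotheses of Proposition~\ref{b1}(1) are met at each good point, namely ``$G$-differentiable at $a$'' and ``locally Lipschitz at $a$,'' and both follow immediately — the first from Phelps's Theorem, the second from the global hypothesis that $f$ is locally Lipschitz on $\Omega$. One minor point worth a sentence is that the exceptional set is the \emph{same} Gaussian null set produced by Phelps's Theorem, because the implication $G\Rightarrow H$ (via local Lipschitzness) is applied pointwise and introduces no new exceptional points. If desired, I would also remark that, by the chain of implications recalled from Schirotzek~\cite{Sch} ($F\Rightarrow H\Rightarrow G$), this corollary is genuinely sharper than the bare $G$-differentiability statement of Phelps, which is the whole point of stating it.
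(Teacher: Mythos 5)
Your argument is exactly the one the paper intends: invoke Phelps's Theorem to get $G$-differentiability off a Gaussian null set, then apply Proposition~\ref{b1}(1) pointwise (using the locally Lipschitz hypothesis) to upgrade to $H$-differentiability with the same exceptional set. The paper does not even write this out, stating only that ``the proof only relies on Proposition~\ref{b1},'' so your proposal matches it and fills in the details correctly.
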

	
	\begin{thm}\label{composicion00}
	Given $X$ a separable Banach space, $\Omega\subset X$ a open set and $Y'$ a dual space with the RNP, if $\psi:Y'\rightarrow Y$ is $H$-differentiable in all $Y'$ y $T:\Omega\rightarrow Y'$ is locally Lipschitz function. Then $Y$ is Asplund and there exists a function $\tau: \Omega\rightarrow Y$ that is $H$-differentiable except in a null Gaussian set.
	\end{thm}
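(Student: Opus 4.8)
The plan is to take $\tau := \psi\circ T$ and to prove the two assertions separately, using only the characterization of $F$-Asplund spaces through the RNP of the dual, the Corollary stated just above, and the composition rule for $H$-differentiable maps in Proposition~\ref{b1}. The first assertion is immediate: by hypothesis the dual $Y'$ has the Radon-Nikod\'ym Property, and since a Banach space $E$ is $F$-Asplund precisely when $E'$ has the RNP, it follows that $Y$ is $F$-Asplund, hence Asplund (every $F$-Asplund space being in particular $G$-Asplund).

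For the existence of the announced map I would first apply the Corollary to the data $X$ separable, $Y'$ with the RNP, $\Omega\subset X$ open and non-empty, and $T:\Omega\rightarrow Y'$ locally Lipschitz; this yields a Gaussian null set $N\subset\Omega$ outside of which $T$ is $H$-differentiable. Fixing $a\in\Omega\setminus N$ and writing $b=T(a)$, the hypothesis that $\psi$ is $H$-differentiable on the whole of $Y'$ guarantees in particular that $\psi$ is $H$-differentiable at $b$; applying Proposition~\ref{b1}(3) with $\Omega'=Y'$ (an open subset of $Y'$ containing $T(\Omega)$) and target space $Y$, we obtain that $\tau=\psi\circ T$ is $H$-differentiable at $a$, with $d_{H}\tau(a)=d_{H}\psi(b)\circ d_{H}T(a)$. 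Since $a$ was arbitrary in $\Omega\setminus N$, the map $\tau$ is $H$-differentiable off the Gaussian null set $N$, as required.

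The argument is mostly organizational once these three ingredients are in hand; the points that merit care are checking that the hypotheses of Proposition~\ref{b1}(3) truly hold at each point of $\Omega\setminus N$ — that the $H$-differentiability of $T$ there, combined with the global $H$-differentiability of $\psi$, transfers to the composite — and ensuring that the single Gaussian null set supplied by the Corollary serves throughout, so that no union or enlargement of exceptional sets is needed. I expect the bookkeeping of the ambient spaces (the double role of $Y'$, as the target of $T$ and as the domain of $\psi$, treated as its own open subset) to be the only place where a little attentiveness is required.
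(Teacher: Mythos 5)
Your proposal is correct and follows essentially the same route as the paper: Phelps's theorem (packaged as the preceding Corollary) gives $H$-differentiability of $T$ off a Gaussian null set, and the chain rule for $H$-derivatives from Proposition~\ref{b1}(3) transfers this to $\tau=\psi\circ T$ on the same exceptional set. You in fact do slightly more than the paper's own proof, which never addresses the claim that $Y$ is Asplund; your derivation of it from the equivalence ``$E$ is $F$-Asplund iff $E'$ has the RNP'' is the intended justification.
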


\begin{proof}	
From the Phelps Theorem, $T$ is $G$-differentiable except in a set $B^{c}$ of null Gaussian measure ($\mu(B^{c})=0$). Since $T$ is locally Lipschitz then T is $H$-differentiable except in $B^{c}$ and if $\tau=\psi\circ T$ is defined then by the chain rule in the $H$-derivative, for $x\in B$, $y'\in Y'$, $ h\in X$ and $ h'\in Y'$ we have that:
	$$d_{H}\tau(x,h)=d_{H}\psi(y',h')\cdot d_{H}T(x,h).$$ 
	Therefore $\tau$ is $H$-differentiable in $B$ and 
{\setlength\arraycolsep{0pt}
	\begin{eqnarray}
	\mu\{x\in X &:& d_{H}\tau(x,h)\ exists\}\nonumber\\&=&\mu\{x\in X:d_{H}\psi(y',h')\cdot d_{H}T(x,h) \ exists\}\nonumber\\&=&\mu\{x\in X: d_{H}T(x,h) \ exists\}  =\mu(B). \nonumber
	\end{eqnarray}}
That is, $\mu \{x \in X: d_{H} \tau(x, h) \ does\ not \ exists \} = \mu (B^{c})$, completing the proof.
\end{proof}
	
	\begin{rem}
If $ Y=Y'$ it is clear that this theorem coincides with  Phelps Theorem.
	\end{rem}
Now, we will state a pair of examples, extracted from Deville~\cite{de}, where sets of $G$-differentiability of norms are shown in certain separable Banach spaces. In these examples the application of Phelps Theorem is allowed, since  the norms are Lipschitz functions and since $\mathbb{R}$ has the RNP.   
	
\begin{exs}\
\begin{enumerate}
\item Let $E=\ell^{1}(\mathbb{R})$ be the Banach space of the sequences $x=(x_{n})_{n\in\mathbb{N}} $ such that $\sum_{n}|x_{n}|<\infty $ and endowed with the norm $\varphi(x)=\sum_{n}|x_{n}|$. If $B\subseteq \ell^{1}(\mathbb{R})$ is the set of the sequences $(x_{n})_{n\in\mathbb{N}}$ such that $x_ {n}\neq 0$ for all $n\in\mathbb{N}$. Then the set of $G$-differentiability of $\varphi$ is $B$ and the $G$-derivative of $\varphi$ in $x$ is
$$d_{G}\varphi(x)=(sig(x_{n}))_{n\in\mathbb{N}}\quad \textnormal{and}\quad d_{G}\varphi(x,h)=\sum_{n}sig(x_{n})h_{n}.$$
\item Let $T$ be a Hausdorff and compact topological space. Then 
	\begin{itemize} 
	 \item[$(i)$] The norm $\varphi(x)=\sup_{t \in T}|x(t)|$ is $G$-differentiable at $x \in\mathcal{C}(T)$ if and only if $\varphi(x)$ reaches the supremum at a single point $t_ {0}\in T$ and has $G$-derivative $$\partial_{G}\varphi(x)(h)=sig((x(t_{0})),\quad\textrm{that is}\quad \partial_{G}\varphi(x)=sig(x(t_{0}))\delta_{t_{0}}.$$ 
	 \item[$(ii)$] The norm $\varphi(x)=\sup_{t\in T}|x(t)|$ is $F$-differentiable in $x \in\mathcal{C}(T)$ if and only if $\varphi(x)$ reaches the supreme in a single isolated point $t_ {0}\in T$ and has $F$-derivative $$\partial_{F}\varphi(x)=sig(x(t_{0}))\delta_{t_{0}}.$$
	\end{itemize}

\end{enumerate}

\end{exs}

From the previous example, it can also be shown that the set of $G$-differentiability of the norm $\varphi(\cdot)$ defined in both spaces is dense and $G _{\delta}$.   

\begin{thm}\
\begin{enumerate}
\item Given the norm $\varphi(x) = \sum_{n}\|x_{n}\| $ on the space $\ell^{1}(\mathbb{R})$. The $G$-differentiability set of $\varphi(\cdot)$, is $G _{\delta}$ and dense.
\item Given the norm $ \varphi(x) =\sup_{t\in T}| x(t)|$ on the space $\mathcal{C}(T)$.  The set of $G$-differentia\-bility of $\varphi(\cdot)$, is open and dense. 
\end{enumerate}
\end{thm}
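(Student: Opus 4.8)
The plan is to read off each of the two $G$-differentiability sets from the examples of Deville~\cite{de} recalled just above, and then to establish the topological assertions by elementary arguments. For part (1), that example identifies the $G$-differentiability set of $\varphi(x)=\sum_n\|x_n\|$ on $\ell^1(\mathbb R)$ with $B=\{x=(x_n)_n\in\ell^1(\mathbb R):x_n\ne 0\text{ for every }n\in\mathbb N\}$. First I would write $B=\bigcap_{n\in\mathbb N}B_n$, where $B_n=\{x:x_n\ne 0\}=(e_n^{\ast})^{-1}(\mathbb R\setminus\{0\})$ and $e_n^{\ast}$ is the $n$-th coordinate functional; since $\|e_n^{\ast}\|\le 1$, each $B_n$ is open and hence $B$ is $G_\delta$. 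For density, given $x\in\ell^1(\mathbb R)$ and $\varepsilon>0$, set $S=\{n:x_n=0\}$ and define $y$ by $y_n=x_n$ for $n\notin S$ and $y_n=\varepsilon 2^{-n-1}$ for $n\in S$; then every coordinate of $y$ is nonzero, $y-x\in\ell^1(\mathbb R)$, and $\|y-x\|_1=\sum_{n\in S}\varepsilon 2^{-n-1}\le\varepsilon$. Both verifications are routine.

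For part (2), the example (item $(i)$) identifies the $G$-differentiability set of $\varphi(x)=\sup_{t\in T}|x(t)|$ on $\mathcal C(T)$ with $A=\{x\in\mathcal C(T):|x|\text{ attains its maximum at exactly one point of }T\}$. For density I would proceed as follows: given $x\ne 0$ in $\mathcal C(T)$ and $\varepsilon>0$, pick $t_0$ with $|x(t_0)|=\|x\|_\infty$ and, using normality of the compact Hausdorff space $T$, choose by Urysohn's lemma a continuous $g\colon T\to[0,1]$ with $g^{-1}(1)=\{t_0\}$ (this needs $\{t_0\}$ to be a zero set, which is automatic when $T$ is metrizable). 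Putting $y=x+\varepsilon\,\mathrm{sign}(x(t_0))\,g$, one gets $|y(t_0)|=\|x\|_\infty+\varepsilon$, while for every $t\ne t_0$, $|y(t)|\le|x(t)|+\varepsilon g(t)\le\|x\|_\infty+\varepsilon g(t)<\|x\|_\infty+\varepsilon=|y(t_0)|$; hence $|y|$ attains its maximum only at $t_0$, so $y\in A$ and $\|y-x\|_\infty\le\varepsilon$.

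The step I expect to be the main obstacle is the claim that $A$ is \emph{open}. The natural attempt is a compactness argument: if $|x|$ is maximal only at $t_0$, then $\sup_{t\notin U}|x(t)|=\|x\|_\infty-\eta_U$ with $\eta_U>0$ for every neighbourhood $U$ of $t_0$, so a perturbation of norm less than $\tfrac13\eta_U$ forces the maximiser of $|y|$ into $U$, and one lets $U$ shrink. But this only confines the maximiser near $t_0$; it does not make it \emph{unique}, since a small perturbation can split the single peak of $|x|$ into two nearby peaks. In $\mathcal C[0,1]$, for instance, the functions $x_a(t)=1-\big((t-\tfrac12)^2-a^2\big)^2$ converge uniformly as $a\to 0^{+}$ to $x_0(t)=1-(t-\tfrac12)^4\in A$, while each $x_a\notin A$ since its modulus is maximal at the two points $t=\tfrac12\pm a$; thus $A$ is not open in $\mathcal C[0,1]$. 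I would therefore read "open" in part (2) as "$G_\delta$", matching the statement announced just before the theorem, and prove the $G_\delta$ property either by exhibiting $A$ as a countable intersection of open sets built from the explicit description of $A$ together with the metric of $T$, or by invoking Mazur's theorem, $\mathcal C(T)$ being separable when $T$ is metrizable. Metrizability of $T$ does seem to be needed here: on $T=\{0,1\}^{\omega_1}$ every continuous real function depends on only countably many coordinates, so $|x|$ can never attain its maximum at a single point, whence $A=\emptyset$ and the statement fails.
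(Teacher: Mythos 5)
Your treatment of part (1) is correct and is essentially the argument in the paper: the coordinate functionals are continuous, so $B=\bigcap_{n}\{x:x_{n}\neq 0\}$ is $G_{\delta}$, and perturbing the zero coordinates by $\varepsilon 2^{-n-1}$ gives density. Nothing to add there.

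For part (2) you have put your finger on a genuine defect of the statement rather than on a gap in your own argument. The set $A$ of functions whose modulus peaks at exactly one point is indeed \emph{not} open: your family $x_{a}(t)=1-\bigl((t-\tfrac{1}{2})^{2}-a^{2}\bigr)^{2}$ converges uniformly to $x_{0}(t)=1-(t-\tfrac{1}{2})^{4}\in A$, while each $x_{a}$ attains its maximum at the two points $\tfrac{1}{2}\pm a$, so no ball around $x_{0}$ lies in $A$. The paper's openness proof breaks down for exactly the reason you anticipated: it fixes $\epsilon=|f(x_{0})|-|f(x)|$ for a point $x\in T$ and then argues inside $B(f,\epsilon/2)$, but this $\epsilon$ depends on $x$, and by continuity of $f$ one has $\inf_{x\neq x_{0}}\bigl(|f(x_{0})|-|f(x)|\bigr)=0$, so there is no single admissible radius. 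The correct conclusion is the one you propose: $A$ is a dense $G_{\delta}$ (by Mazur's theorem in the separable case, or directly), not open. Your Urysohn-based density argument is sound whenever points of $T$ are $G_{\delta}$ (e.g.\ $T$ metrizable), a hypothesis the paper also uses implicitly since its own density proof takes balls $B(x_{1},\varepsilon/2)$ inside $T$; and your observation that $A=\emptyset$ for $T=\{0,1\}^{\omega_{1}}$ correctly shows that even density fails for general compact Hausdorff $T$. The only cosmetic point: treat $x=0$ separately in the density step (take $y=\varepsilon g$), since $\mathrm{sign}(x(t_{0}))$ is then undefined.
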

\begin{proof}
${(1)}$ Since the linear form $x \rightarrow x_{n}$ is continuous, then the set $A_{n}$ of the $x$ such that $x_ {n}\neq 0$, is open for all $n$ and $B=\cap_{n} A_{n}$ is $G_{\delta}$. Now $B$ is dense in $\ell^{1}(\mathbb{R})$, since if you take $x\in\ell^{1}(\mathbb{R})$, $\epsilon >0$ and define $y_{n}=x_ {n}$ if $x_ {n}\neq0 $ and $y_{n}=\epsilon / 2^{n}$ if $x_ {n}=0$, then $\varphi(y-x)<\epsilon$.

\medskip\noindent${(2)}$  If $x\in T$, $f\in B$, $g \in B(f,\epsilon/2)$ and $\epsilon=|f(x_{0})| - |f(x)|$ then
\begin{eqnarray}
	|g(x)|+\epsilon&=&|g(x)|+|f(x_{0})|-|f(x)|\leq|g(x)-f(x)|+|f(x_{0})|\nonumber\\&<&\frac{\epsilon}{2}+|f(x_{0})|\leq \epsilon+|f(x_{0})-g(x_{0})|+|g(x_{0})|\nonumber\\&=&\frac{\epsilon}{2}+\frac{\epsilon}{2}+|g(x_{0})|=\epsilon+|g(x_{0})|\nonumber
	\end{eqnarray}
that is, $|g(x)|<|g(x_{0})|$. Therefore $B(f, \epsilon / 2)\subseteq B$ for all $f\in B$, meaning that $B$ is open, which is also $G_{\delta}$.

Let $ f\in B^{c}$ and $\epsilon> 0$. since $f$ is continuous over $\overline{B(x_{1}, \varepsilon / 2)}$, then $f$ is a bounded function and therefore there exists $x_{1} \in \overline{B(x_ {1}, \varepsilon / 2)}$ such that $\varphi(f) - \frac{\epsilon}{4}<|f(x_{1})| \leq \varphi(f)$. If the border points are represented as $\delta_{x_{1}}$ and the function $g$ as:		
\begin{center}
\begin{tabular}{lll}
	$g(x)= \left\{ \begin{array}{lll}
	\frac{f(x_{1})-f(\delta_{x_{1}})}{\epsilon/2}\|x\|+\frac{\|x_{1}\|f(\delta_{x_{1}})-f(x_{1})\|\delta_{x_{1}}\|+(\epsilon/2)^{2}}{\epsilon/2}   & \ \textrm{if $x\in B(x_{1},\varepsilon/2)$} \\
	f(x)  & \ \textrm{if $(B(x_{1},\varepsilon/2))^{c}$. }
	\end{array} \right.$
	\end{tabular} 
	\end{center}
then $g\in B$ and also
	$$\varphi(f-g)=\sup_{x\in\overline{B(x_{1},\varepsilon/2)}}|g(x)-f(x)|=\sup_{x\in B(x_{1},\varepsilon/2)}|g(x)-f(x)|=\epsilon/2<\epsilon,$$	
showing that the set of $G$-differentiability of $\varphi(x)=\sup_{t \in T}|x(t)|$ is dense in $C(T)$.	
\end{proof}

\begin{rem}
If we take all the points in which the functions $x \in C(T)$ are $F$-differentiable, this is equivalent to take all the isolated points in $T$. But by its  nature this set of isolated points is not dense at $T$, so by Proposition 2.2.2 of Fabian~\cite{fa3} we conclude that the set of $x\in C(T)$ such that $\varphi(x)$ is $G$-differentiable is not dense in $C(T)$.
\end{rem}

\section[$G$-derivative of norms in non-separable spaces]{$G$-derivative of norms in non-separable spaces}

In this section we present the main results, which show sets of $G$-differentiability of norms on non-separable Banach spaces. But first we will point out that the impossibility of extend  Phelps theorem to any non-separable Banach space. Indeed, in the case of space $\ell^{\infty} (\mathbb{R})$, Larman in~\cite {Larman} shows that the function $\varphi(x)=\limsup_{n\in\mathbb{N}}|x_ {n}|$ is  continuous and not $G$-differentiable at any point. This result also shows that even the condition of being Lipschitz is not enough to guarantee the consequences of Phelps Theorem in the non-separable case. In the following theorem the  Lipschitz condition is satisfied and the consequences of  Phelps Theorem are also satisfied.

\begin{thm}\label{l*1}
	Let $ E=\ell^{\infty}(\mathbb{R})$ be the Banach space of the bounded  sequences endowed with the norm $\varphi(x)=\sup_{n}|x_ {n}|$. If $B\subseteq\ell^{\infty}(\mathbb{R})$ is the set of the sequences $x\in \ell^{\infty}(\mathbb{R})$ for which there exist $p\in\mathbb{N}$ and $\epsilon>0$ such that $|x_{k}|<|x_{p}|-\epsilon$ for all $k\neq p$. Then
	\begin{enumerate}
	 \item The $G$-differentiability set of $\varphi$ is $B$ and the $G$-derivative of $\varphi$ in $x\in B$ is $$\partial_{G}\varphi(x,h) = sig (x_{p})h_{p}$$
	 \item $B$ is an open set and is dense.
	 \item There is a Gaussian measure $\mu$, such that $\mu(B^{c})=0$.
	\end{enumerate}
	\end{thm}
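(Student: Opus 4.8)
The plan is to treat the three items separately; the hinge of (1) is an explicit formula for the one‑sided directional derivatives of $\varphi$, while the least routine point is (3), where one must exhibit a genuine Gaussian measure on the non‑separable space $\ell^{\infty}(\mathbb{R})$. First I would check the inclusion "$B\subseteq\{\text{$G$-differentiability points}\}$'' with the announced derivative. If $p,\epsilon$ witness $x\in B$, then $|x_p|=\varphi(x)=:M\ge\epsilon>0$ and $|x_k|\le M-\epsilon$ for all $k\neq p$; hence for all sufficiently small $|t|$ (depending on $h$) one has $|x_k+th_k|<M-\epsilon/2<|x_p+th_p|$ for every $k\neq p$, so $\varphi(x+th)=|x_p+th_p|$. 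Since $x_p\neq 0$, $t\mapsto|x_p+th_p|$ is differentiable at $0$ with derivative $sig(x_p)h_p$, and $h\mapsto sig(x_p)h_p$ is a continuous linear functional; thus $\varphi$ is $G$-differentiable at every $x\in B$ with $\partial_{G}\varphi(x,h)=sig(x_p)h_p$.

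For the reverse inclusion I would establish, for every $x$ with $M:=\varphi(x)>0$, the formula
$$\partial_{G}\varphi(x,h)=\lim_{\delta\to 0^{+}}\ \sup\{\,sig(x_n)h_n:\ |x_n|>M-\delta\,\}\qquad(h\in\ell^{\infty}(\mathbb{R})).$$
The "$\le$'' bound uses that for small $t$ only indices with $|x_n|>M-\delta$ contribute to $\varphi(x+th)$ and on those $|x_n+th_n|=|x_n|+t\,sig(x_n)h_n$, so $\varphi(x+th)\le M+t\sup_{|x_n|>M-\delta}sig(x_n)h_n$; the "$\ge$'' bound is obtained by extracting distinct indices $n_j$ with $|x_{n_j}|\to M$ along which $sig(x_{n_j})h_{n_j}$ stays within $\eta$ of the right‑hand side and estimating $\varphi(x+th)\ge|x_{n_j}+th_{n_j}|$. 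Granting the formula, suppose $x\notin B$ and $x\neq 0$. Then either (i) there are $p\neq q$ with $|x_p|=|x_q|=M$, or (ii) there are pairwise distinct indices $k_j$ with $|x_{k_j}|\to M$: indeed, if the value $M$ is attained at a single index $p$, then $x\notin B$ forces $\sup_{k\neq p}|x_k|=M$ (unattained), giving (ii); if $M$ is attained at two indices we are in (i); if $M$ is attained nowhere we are again in (ii). In case (i) take $h=sig(x_p)e_p$ with $e_p$ the $p$-th unit vector: the formula gives $\partial_{G}\varphi(x,h)=1$, $\partial_{G}\varphi(x,-h)=0$. In case (ii) take $h_{k_j}=sig(x_{k_j})(-1)^{j}$ and $h_n=0$ otherwise: the formula gives $\partial_{G}\varphi(x,h)=\partial_{G}\varphi(x,-h)=1$. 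In either case $\partial_{G}\varphi(x,h)+\partial_{G}\varphi(x,-h)\neq 0$, so if $\varphi$ were $G$-differentiable at $x$ with derivative $u$ we would get $u(h)+u(-h)=\partial_{G}\varphi(x,h)+\partial_{G}\varphi(x,-h)\neq 0$, contradicting linearity of $u$. Finally $\varphi$ is not $G$-differentiable at $0$ because $\partial_{G}\varphi(0,h)=\varphi(h)$ is not linear. This proves (1).

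For (2): openness follows since $p,\epsilon$ witnessing $x\in B$ and $\varphi(y-x)<\epsilon/4$ imply $p,\epsilon/2$ witness $y\in B$, so $B(x,\epsilon/4)\subseteq B$; density follows by, given $x$ and $\eta>0$, choosing $p$ with $|x_p|>\varphi(x)-\eta/4$ (or $p=1$ if $x=0$) and letting $y$ coincide with $x$ except that $y_p=x_p+s\,\eta/2$ where $s=sig(x_p)$ if $x_p\neq 0$ and $s=1$ otherwise, so that $\varphi(y-x)=\eta/2<\eta$ and $|y_k|\le\varphi(x)<|y_p|$ with gap $>\eta/4$, i.e. $y\in B$. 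For (3): let $(g_n)_n$ be i.i.d.\ standard Gaussians, put $\xi_n=2^{-n}g_n$, and let $\mu$ be the law of $\xi=(\xi_n)_n$. By Borel--Cantelli $\xi_n\to 0$ almost surely, so $\mu$ is concentrated on the separable closed subspace $c_0\subseteq\ell^{\infty}(\mathbb{R})$; since every continuous linear functional on $\ell^{\infty}(\mathbb{R})$ coincides $\mu$-almost everywhere with an element of $\ell^{1}(\mathbb{R})$ and hence has centred Gaussian law, $\mu$ is a (centred) Gaussian measure on $\ell^{\infty}(\mathbb{R})$, and $B$ is $\mu$-measurable because it is open by (2). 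As the $\xi_n$ are independent with continuous distributions, $\mu$-almost surely the numbers $|\xi_n|$ are pairwise distinct; together with $\xi_n\to0$ this forces, $\mu$-almost surely, $\varphi(\xi)$ to be attained at a unique index $p$ with $\sup_{k\neq p}|\xi_k|<|\xi_p|$ (the supremum over $k\neq p$ is again attained, hence strictly below $|\xi_p|$). Thus $\xi\in B$ $\mu$-almost surely, with $\epsilon=\tfrac12(|\xi_p|-\sup_{k\neq p}|\xi_k|)$, i.e. $\mu(B^{c})=0$.

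The forward inclusion of (1), together with (2) and (3), are short; the real work is the directional‑derivative formula and the case analysis behind the reverse inclusion of (1). The one conceptual subtlety, proper to the non‑separable setting, is (3): even though Phelps' theorem fails on $\ell^{\infty}(\mathbb{R})$, one can still produce a genuine Gaussian measure — a rapidly decaying coordinate‑wise Gaussian concentrated on the separable subspace $c_0$ — for which the open dense set $B$ has full measure; the only point needing care there is seeing that this product measure is a bona fide Gaussian measure on the non‑separable space (which reduces to the fact that $c_0$ is closed in $\ell^{\infty}(\mathbb{R})$ and that singular functionals vanish $\mu$-a.e.).
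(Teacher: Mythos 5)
Your proposal is correct, and while the forward inclusion in (1) and all of (2) match the paper's argument almost verbatim, you take genuinely different routes on the two harder points. For the reverse inclusion in (1), the paper extracts a single-signed subsequence $(x_{k_n})$ with $|x_{k_n}|\to\beta$ and perturbs it alternately; your route --- first proving the closed formula $d_{+}\varphi(x,h)=\lim_{\delta\to 0^{+}}\sup\{sig(x_n)h_n:|x_n|>\varphi(x)-\delta\}$ and then splitting $B^{c}\setminus\{0\}$ into ``two maximizers'' versus ``supremum approached along infinitely many distinct indices'' --- is more systematic, and it in fact covers configurations (e.g.\ $x=(1,-1,\tfrac12,\tfrac13,\dots)$, where the supremum is attained at exactly two indices of opposite sign and nowhere nearly attained elsewhere) for which the paper's constant-sign subsequence does not exist; the price is the extra work of justifying the derivative formula, which you sketch correctly. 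For (3), the paper invokes Vakhania's criterion for a Gaussian measure on $\mathbb{R}^{\mathbb{N}}$ to charge $\ell^{\infty}(\mathbb{R})$ and then runs a finite-dimensional projection and induction argument; your explicit choice, the law of $(2^{-n}g_n)_n$ with $g_n$ i.i.d.\ standard Gaussians, is concentrated on the separable closed subspace $c_0$ and gives $\mu(B^{c})=0$ in a few lines from the almost sure pairwise distinctness of the $|\xi_n|$ together with $\xi_n\to 0$, sidestepping all the projection bookkeeping. One caveat on your side, which applies equally to the paper's measure: what you build is a degenerate Gaussian measure on $\ell^{\infty}(\mathbb{R})$ (purely singular functionals have law $\delta_0$ under it), so it does not witness membership of $B^{c}$ in the Gaussian null ideal in Phelps's strict (nondegenerate) sense; but the statement only asks for a Gaussian measure with $\mu(B^{c})=0$, which you deliver.
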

\begin{proof}	
${(1)}$ If $x\in B$, $\epsilon>0$ and $p\in\mathbb{N}$ satisfy that $|x_{k}|<|x_{p}|-\epsilon$ for all $ k \neq p $, $\delta = \epsilon / 2$ and if $\|h\|<\delta$, getting $$\varphi(x+h)=\sup_{n\in \mathbb{N}}|x_{n}+h_{h}|=|x_{p}+h_{p}|\quad \textnormal{y} \quad sig(x_{p}+h_{p})=sig(x_{p}),$$
with $u(h)=sig(x_{p})h_{p}\in\mathcal{L}(E,\mathbb{R})$. Then 
	\begin{eqnarray}
	r(x,h)&=&\varphi(x+h)-\varphi(x)-sig(x_{p})h_{p}=|x_{p}+h_{p}|-|x_{p}|-sig(x_{p})h_{p}\nonumber\\&=&\nonumber sig(x_{p})(x_{p}+h_{p}-x_{p}-h_{p})=0,\nonumber
	\end{eqnarray}
getting ${\displaystyle\lim_{h\rightarrow 0}}\frac{\|r(x, h)\|}{\|h\|}=0$. Then $\varphi$ is $F$-differentiable, which implies that $$u(h)=sig(x_{p})h_ {p}\  \textrm{is $G$-derivated for $x\in B$}.$$
	
	To show that $\varphi$ is not $G$-differentiable in $B^{c}$. Let $x \in B^{c}$ and $\beta=\sup_ {n \in \mathbb{N}}|x_{n}| $. There is a subsequence $ (x_ {k_ {n}})_{n\in\mathbb{N}}$ such that $x_{k_{n}}>0$ for all $n$ with $x_{k_{n }}\rightarrow\beta$ or a subsequence $(x_{k_{n}})_{n\in\mathbb{N}}$ such that $x_{k_{n}}<0$ for all $n$ with $-x_{k_{n}} \rightarrow \beta$ (in either case we proceed in the same way). Let $h=(h_{n})_{n\in\mathbb{N}} $, where $h_{j}=0$ if $j\neq k_{n}$, $h_{j}=1$ if $j=k_{n}$, $n$ is even number and $h_{j}=-1$ if $j=k_{n}$, $n$ is odd. Now, if $x_{k_ {n}}>0$ for all $n\in\mathbb{N}$ and $t$ is small enough, we have:$$\sup\{|x_{j}+th_{j}|:j\neq k_{n}\}\leq\beta,$$
	$$\sup\{|x_{j}+th_{j}|:j= k_{n}\textrm{ and $n$ is even}\}=\beta+t,$$  $$\sup\{|x_{j}+th_{j}|:j= k_{n}\textrm{ and $n$ is odd}\}=\beta-t.$$
	Then $\varphi(x+th)=\beta+t$ if $t>0$ and $\varphi(x+th)=\beta-t$ for $t<0$, hence
	$$d_{+}\varphi(x,h)=1\quad \textnormal{y}\quad d_{-}\varphi(x,h)=-1,$$
	therefore, $\varphi$ is not $G$-differentiable in $B^{c}$.
	
\medskip\noindent{(2)} If we take $x\in B$ then there exists $p \in \mathbb{N}$ and $\epsilon>0$ such that $|x_{k}|<|x_{p}|-\epsilon$ for all $k\neq p$. If we assume $y\in B(x,\epsilon/ 4)$ and $k \neq p$, we have
	\begin{eqnarray}
	|y_{k}|&\leq&|y_{k}-x_{k}|+|x_{k}|\leq \epsilon/4+|x_{p}|-\epsilon\nonumber\\&\leq& |x_{p}-y_{p}|+|y_{p}|-3\epsilon/4\nonumber\\&\leq& |y_{p}|-\epsilon/2\nonumber
	\end{eqnarray}
Therefore $y\in B$, showing that $B$ is open. To show that $B$ is dense in $\ell^{\infty}(\mathbb{R})$, take $x\in\ell^{\infty}(\mathbb{R})$ and $\epsilon>0$, therefore there is an index $p$ such that $\varphi(x)-\epsilon/4<|x_{p}|\leq\varphi(x)$. If we define $y=(y_{n})_{n\in\mathbb{N}}$ as $y_ {p} = sig(x_{p})(\varphi(x)+\epsilon/2)$ and $y_{k}= x_{k}$ if $k\neq p$. Then $ |y_{k}|\leq\varphi(x)=|y_{p}|-\epsilon/2$ if $k\neq p$ and therefore $ y \in B$. Moreover  	
\begin{eqnarray}
|y_{p}-x_{p}|&=&|sig(x_{p})(\varphi(x)+\epsilon/2)-sig(x_{p})|x_{p}|)|\nonumber\\&=&|\varphi(x)+\epsilon/2-|x_{p}||\nonumber\\&<&3\epsilon/4<\epsilon 
\nonumber\end{eqnarray}
Therefore, $\varphi(y-x)=|y_{p}-x_{p}|<\epsilon$.
		
\medskip\noindent{(3)} In $\ell^{\infty}(\mathbb{R})$ there is a Gaussian measure $\mu$, since in Section 2.4.3 of the Vakhania book, it is shown that there is a Gaussian $\mu$ measure about $\mathbb{R}^{\mathbb{N}}$ which satisfies $\mu(\ell^{\infty}(\mathbb{R}))=1$ if and only if it satisfies that for $a_{j}=E(x_{j})$ and $s_{kk}=E(x_{k}-a_{k})^{2}$  $$\{a_{j}\}\in\ell^{\infty}(\mathbb{R})\ \text{and}\ \sum_{k=1}^{\infty}e^{-r/s_{kk}}<\infty,\ \text{for some}\ r>0.$$
	On the other hand, given $t \in \mathbb{N}$. If it is defined $f_{t}:\ell^{\infty}(\mathbb{R})\rightarrow\mathbb{R}^{t}$, how	
	$$f_{t}(x)=(x_{1}, x_{2},\cdots,x_{t-1},x_{p}),\quad x\in\ell(\mathbb{R}),$$
$|x_{k}|<|x_{p}|-\epsilon_{p}$, for $k=1,\cdots,t-1$ and $\epsilon_{p}>0$. The function $f$ is continuous since for $y\in B(x,\delta)$, $x,y\in\ell^{\infty}(\mathbb{R})$ we have
\begin{eqnarray}
	\|f_{t}(x)-f_{t}(y)\|_{\mathbb{R}^{t}}&=&\|(x_{1}-y_{1},\cdots,x_{t-1}-y_{t-1},x_{p}-y_{p})\|_{\mathbb{R}^{t}}\nonumber\\&=&|x_{1}-y_{1}|+\cdots+|x_{t-1}-y_{t-1}|+|x_{p}-y_{p}|\nonumber\\&\leq&t\sup_{n}|x_{n}-y_{n}|=t\|x-y\|_{\ell^{\infty}(\mathbb{R})}\nonumber
	\end{eqnarray}
and then for all $\epsilon>0 $ there is a $\delta=\frac{\epsilon}{t}$ such that $\|xy\|_{\ell^{\infty}(\mathbb{R})}<\delta$ implies $\|f_{t}(x)-f_{t}(y)\|_{\mathbb{R}^{t}}<\epsilon$.
	
From  all of the above, for $\mu_{t}$ a Gaussian measure in $\mathbb{R}^{t}$ and $\mu$ the Gaussian measure in $\ell^{\infty}(\mathbb{R})$, one obtains $\mu(A)=\mu_{t}(f_{t}(A))$. Therefore, for $g_ {t}^{w}:\ell^{\infty}(\mathbb{R})\rightarrow\mathbb{R}^{t}$ such that 
	$$g_{t}^{w}:\ell^{\infty}(\mathbb{R})\rightarrow \mathbb{R}^{t},\text{with $x_{p}$ in the $w$th coordinate of $\mathbb{R}^{t}$},$$ 
	also satisfies $\mu(A)=\mu_{t}(g_{t}^{w}(A))$, for all $w\leq t\in\mathbb{N}$.
	
	If we takes $t=2$ and we defines $B_{2}=\bigcup_{w = 1}^{2}\{g_{2}^{w}(x):|x_{k}|<|x_{p}| \epsilon_{p}, \ k \neq w\}$, it can be  can easily proved that
	$$B_{2}=\mathbb{R}^{2}-\{(x_{1},x_{2}):x_{2}=x_{1},\ x_{2}=-x_{1}\},$$
	of which, $\mu_{2}({B}{2})=1$ and therefore $B_{2}^{c}=\{(x_{1},x_{2}):x_{2}=x_{1},\ x_ {2} =-x_{1}\}$ such that $\mu_{2}(B_{2}^{c})=0$.
	
Now, if the previous result is true for $t = n$, that is $\mu(B_{n})=1$ for $$B_{n}=\bigcup_{w=1}^{n}\{g_{n}^{w}(x):|x_{k}|<|x_{p}|-\epsilon_{p},\ k\neq w\}.$$ 
	Thus  
	\begin{eqnarray}
	B_{n+1}&=&\bigcup_{w=1}^{n+1}\{g_{n+1}^{w}(x):|x_{k}|<|x_{p}|-\epsilon_{p},\ k\neq w\}\nonumber\\ &=&\bigcup_{w=1}^{n-1}\{g_{n+1}^{w}(x):|x_{k}|<|x_{p}|-\epsilon_{p},\ k\neq w\}\nonumber\\& & \cup \bigcup_{w=n}^{n+1}\{g_{n+1}^{w}(x):|x_{k}|<|x_{p}|-\epsilon_{p},\ k\neq w\}\nonumber\\&=& B_{n-1}\times [ \epsilon_{p}-|x_{p}|,|x_{p}|-\epsilon_{p}] ^{2}\cup [\epsilon_{p}-|x_{p}|,|x_{p}|-\epsilon_{p}]^{n-2} \times B_{2}\nonumber\\ &\supseteq& B_{n}\times [ \epsilon_{p}-|x_{p}|,|x_{p}|-\epsilon_{p}]\cup [\epsilon_{p}-|x_{p}|,|x_{p}|-\epsilon_{p}]^{n-2} \times B_{2}\nonumber\\ &\supseteq& B_{n}\times [ \epsilon_{p}-|x_{p}|,|x_{p}|-\epsilon_{p}],\nonumber
	\end{eqnarray}
then of course, for all $x_{p}\in\mathbb{R}$, it is satisfied  
	\begin{eqnarray}
	\mu_{n+1}(B_{n+1})&\geq& \mu_{n+1}(B_{n}\times [ \epsilon_{p}-|x_{p}|,|x_{p}|-\epsilon_{p}])\nonumber\\&=&\mu_{n}(B_{n})\times\mu_{1}( [ \epsilon_{p}-|x_{p}|,|x_{p}|-\epsilon_{p}])\nonumber\\&=& \mu_{1}( [ \epsilon_{p}-|x_{p}|,|x_{p}|-\epsilon_{p}]),\nonumber
	\end{eqnarray}
    what implies $\mu_{n+1}(B_{n + 1})=1$ and that $\mu_{n+1}(B_{n+1}^{c})=0$, of what which for all $n\geq 2$ it is satisfied that $\mu_{n}(B_{n} ^{c})=0$.
	
Finally, as $B^{c}\subset B_{n}^{c}\times\ell^{\infty-\{1,\cdots, n\}}(\mathbb{R})$, where $B_{n}^{c} \times \ell^{\infty-\{1,\cdots,n\}}(\mathbb{R})$ is the set of the sequences in $\ell^{\infty}(\mathbb{R})$ with the first $n$ components in $B_{n}^{c}$, it is true that
\begin{eqnarray}
\mu(B^{c})&\leq&\mu(B_{n}^{c}\times \ell^{\infty-\{1,\cdots,n\}}(\mathbb{R}))\nonumber\\&\leq&\mu(B_{n}^{c}\times \ell^{\infty}(\mathbb{R}))\leq\mu_{n}(B_{n}^{c})\times\mu(\ell^{\infty}(\mathbb{R}))\nonumber\\&\leq&\mu_{n}(B_{n}^{c}),\nonumber
\end{eqnarray} 
	concluding that $\mu(B^{c})\leq\mu_{n}(B_{n}^{c})$ for all $n\geq 2$, that is $\mu(B^{c})=0$.
\end{proof}

Given $NBV[a, b]$ the Banach space of the normal real functions of bounded variation endowed with the norm $\varphi(f)=| f |([a, b])$. In the following theorem, a result similar to Larman's result in ~\cite{Larman} will be shown in the case of a norm.

\begin{thm}
$\varphi=|f|([a,b])$ it is not $G$-differentiable at any point $f\in NBV[a,b]$.
	\end{thm}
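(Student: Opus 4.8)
The plan is to exploit the classical identification of $NBV[a,b]$ with the space $M[a,b]$ of regular signed Borel measures on $[a,b]$, under the Lebesgue--Stieltjes correspondence $f\mapsto\mu_f$, in which $\varphi(f)=|f|([a,b])$ is exactly the total-variation norm $\|\mu_f\|=|\mu_f|([a,b])$. First I would recall that every $\mu\in M[a,b]$ has at most countably many atoms — equivalently, every $f\in NBV[a,b]$ has at most countably many points of discontinuity — so that for a fixed but arbitrary $f$ the set $C_f=\{c\in(a,b):\mu_f(\{c\})=0\}$ is the complement in $(a,b)$ of a countable set, hence uncountable and in particular non-empty. I would then fix any $c\in C_f$ and let $h=h_c\in NBV[a,b]$ be the normalized unit jump function at $c$, i.e. the element whose associated measure is the Dirac mass $\delta_c$; since $a<c<b$ this $h_c$ satisfies the normalization conventions of $NBV[a,b]$ and has $\varphi(h_c)=1$.

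The heart of the argument is the additivity of total variation across the singleton $\{c\}$: because $|\mu_f|(\{c\})=|\mu_f(\{c\})|=0$, the measures $\mu_f$ and $t\delta_c$ are mutually singular for every real $t$, so $|\mu_f+t\delta_c|=|\mu_f|+|t|\,\delta_c$ and therefore $\varphi(f+th_c)=|\mu_f+t\delta_c|([a,b])=|\mu_f|([a,b])+|t|=\varphi(f)+|t|$. Consequently $\dfrac{\varphi(f+th_c)-\varphi(f)}{t}=1$ for $t>0$ and also $\dfrac{\varphi\big(f+t(-h_c)\big)-\varphi(f)}{t}=1$ for $t>0$, so the one-sided limits along $h_c$ and along $-h_c$ both equal $1$. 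If $\varphi$ were $G$-differentiable at $f$ with derivative $u\in\mathcal L(NBV[a,b],\mathbb R)$, linearity of $u$ would give $u(-h_c)=-u(h_c)$, i.e. $1=-1$, a contradiction; hence $\varphi$ is not $G$-differentiable at $f$, and since $f$ was arbitrary the theorem follows. The same computation also covers the case $f=0$.

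I expect the only genuine work to be bookkeeping: pinning down the normalization convention used for $NBV[a,b]$ (say $f(a)=0$ together with right-continuity on $(a,b)$) so that $h_c$ really lies in the space and has variation one, and, if one prefers to avoid the measure-theoretic dictionary, proving the identity $\mathrm{Var}(f+th_c;[a,b])=\mathrm{Var}(f;[a,b])+|t|$ directly — the inequality $\le$ being the triangle inequality for variation, and $\ge$ following by refining an arbitrary partition with the points $c-\delta,c,c+\delta$ and using continuity of $f$ at $c$ to absorb the error as $\delta\to 0$. Conceptually this is the $NBV[a,b]$ analogue of Larman's example and of the failure of $G$-differentiability of the sup-norm on $\ell^{\infty}(\mathbb R)$: non-separability furnishes, at every point, a "coordinate direction" — here a Dirac mass placed at a continuity point of $f$ — along which the norm grows like $|t|$, which is incompatible with the existence of a linear $G$-derivative.
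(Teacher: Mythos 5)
Your proof is correct, and it takes a genuinely different route from the paper's. The paper fixes points $c,d$ where $f$ approaches its infimum and supremum, perturbs by a Heaviside-type jump at the midpoint $\frac{c+d}{2}$, and then asserts the values of the one-sided difference quotients through a case analysis on $c<d$, $c>d$, $c=d$ and on the one-sided limits of $f$ there; those asserted values implicitly depend on how $f$ behaves near $\frac{c+d}{2}$ (in particular on whether $f$ already jumps there), which the paper does not control. Your argument sidesteps all of this: by placing the unit jump $h_c$ at a continuity point $c$ of $f$ (which exists because a $BV$ function has at most countably many discontinuities), you get mutual singularity of $\mu_f$ and $\delta_c$ and hence the exact identity $\varphi(f+th_c)=\varphi(f)+|t|$, after which non-$G$-differentiability follows from linearity via $u(-h_c)=-u(h_c)$. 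This is cleaner, requires no case analysis, handles constant $f$ uniformly, and makes transparent the analogy with the $\ell^{\infty}(\mathbb{R})$ example: at every point there is a ``coordinate direction'' along which the norm grows like $|t|$. The only bookkeeping you flag --- that $h_c$ respects the normalization of $NBV[a,b]$ and has variation one, and that $|\mu_f|(\{c\})=|\mu_f(\{c\})|$ for a singleton --- is indeed routine. If one wishes to avoid the measure-theoretic dictionary entirely, your suggested direct verification of $\mathrm{Var}(f+th_c;[a,b])=\mathrm{Var}(f;[a,b])+|t|$ at a continuity point $c$ works as stated.
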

	
\begin{proof}
Let $c,d \in [a, b]$ and $f \in NBV[a, b]$ not constant, if we define $c$ as the number such that $\lim_{x\rightarrow c} f(x)=m \leq f(y)$ for all $y\in [a, b]$, a $d$ as the one that meets $\lim_{x\rightarrow d} f(x)=M \geq f(y)$ for all $y\in [a, b]$ and define $h\in NBV[a, b]$ as
	\begin{center}
	\begin{tabular}{lll}
	$h(x)= \left\{ \begin{array}{lll}
	0 & \textrm{if $x\in[a,\frac{c+d}{2}]$}\\ 
	1 & \textrm{if $x\in(\frac{c+d}{2},b]$.}
	 \end{array} \right.$
	\end{tabular} 
	\end{center}
Then for $\frac{\varphi(f+th)-\varphi(f)}{t}$, with $c\neq d$ one obtains
	\begin{center}
	\begin{tabular}{lll}
	$\frac{\varphi(f+th)-\varphi(f)}{t}= \left\{ \begin{array}{lll}
	\ \ 1 & \textrm{if $c<d$ and $t>0$} \\
	-1 & \textrm{if $c<d$ and $t<0$} \\
	-1 & \textrm{if $c>d$ and $t>0$} \\
	\ \ 1 & \textrm{if $c>d$ and $t<0$} 
	 \end{array} \right.$
	\end{tabular} 
	\end{center}
If $c=d$ and $\lim_{x \rightarrow c^{+}} f (x)=\alpha$, $\lim_{x \rightarrow c^{-}}f(x)=\beta$ , then
	\begin{center}
	\begin{tabular}{lll}
	$\frac{\varphi(f+th)-\varphi(f)}{t}= \left\{ \begin{array}{lll}
	\ \ 1 & \textrm{if $\alpha>\beta$ and $t>0$} \\
	-1 & \textrm{if $\alpha>\beta$ and $t<0$} \\
	-1 & \textrm{if $\alpha<\beta$ and $t>0$} \\
	\ \ 1 & \textrm{if $\alpha<\beta$ and $t<0$}
	\end{array} \right.$
	\end{tabular} 
	\end{center}
In the case where $f\in NBV[a, b]$ is constant, if one takes the function $h$ as in the previous cases, one obtains
	\begin{center}
	\begin{tabular}{lll}
	$\frac{\varphi(f+th)-\varphi(f)}{t}= \left\{ \begin{array}{lll}
	\ \ 1 & \textrm{if $t>0$} \\
	-1 & \textrm{if $t<0$} 
	\end{array} \right.$
	\end{tabular} 
	\end{center}
Therefore, the norm $\varphi $ is not $G$-differentiable at any point $f\in NBV[a,b]$.
\end{proof}

Consider the Banach space $L^{\infty} (\mathbb{R})$ of the actual measurable and bounded functions with the norm $\varphi(f) = \inf \{M:|f(x)|\leq M\}$, demonstrating a generalization of what is done in $\ell^{\infty}(\mathbb{R})$.
		
	\begin{thm}
	Let $B$ be the set of the continuous functions $f \in L^{\infty} (\mathbb{R})$ that have an only point  $x_{0}\in\mathbb{R}$ where $f$ assumes the supremum. The $G$-differentiability set of $\varphi$ is $B$ and if $f\in B$, then $$\partial_{G}\varphi(f,h)=sig(f(x_{0}))h(x_{0}).$$
	\end{thm}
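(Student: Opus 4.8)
The plan is to prove the two inclusions ``$G$-differentiability set of $\varphi$ $\supseteq B$'' (together with the stated formula) and ``$G$-differentiability set $\subseteq B$'', following closely the pattern used for $\mathcal{C}(T)$ in the examples above and for $\ell^{\infty}(\mathbb{R})$ in Theorem~\ref{l*1}; the basic reduction is that for a continuous $f\in L^{\infty}(\mathbb{R})$ the essential supremum norm satisfies $\varphi(f)=\sup_{x}|f(x)|$, so everything is governed by a genuine pointwise maximum. For $f\in B$, replacing $f$ by $-f$ if necessary I may assume $f(x_{0})=\varphi(f)>0$, where $x_{0}$ is the unique point at which $|f|$ attains $\varphi(f)$. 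The first step is to record the \emph{gap away from $x_{0}$}: by continuity of $f$ and uniqueness of the maximiser, for each neighbourhood $U$ of $x_{0}$ there is $\eta=\eta(U)>0$ with $|f(x)|\leq\varphi(f)-\eta$ for all $x\notin U$ (immediate on a compact interval; on $\mathbb{R}$ this also uses that $|f|$ does not approach its supremum at infinity, which is implicit in $f\in B$). Then, for a direction $h$ and $t>0$ small, split $\mathbb{R}=U\cup U^{c}$: on $U^{c}$ one has $|f(x)+th(x)|\leq\varphi(f)-\eta+t\|h\|_{\infty}<\varphi(f)$, while on $U$ the function $f$ is positive and close to $\varphi(f)$, so $|f(x)+th(x)|=f(x)+th(x)\leq\varphi(f)+t\sup_{U}h$ with $\sup_{U}h\to h(x_{0})$ as $U$ shrinks to $\{x_{0}\}$ by continuity of $h$ at $x_{0}$. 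This yields $\limsup_{t\to0^{+}}\bigl(\varphi(f+th)-\varphi(f)\bigr)/t\leq h(x_{0})$, and the matching lower bound $\varphi(f+th)\geq f(x_{0})+th(x_{0})=\varphi(f)+th(x_{0})$ forces the limit to be $h(x_{0})$; restoring the sign produces $\partial_{G}\varphi(f,h)=sig(f(x_{0}))h(x_{0})$, i.e. the functional $sig(f(x_{0}))\delta_{x_{0}}$.

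For the reverse inclusion I show that if $f\notin B$ then $\varphi$ is not $G$-differentiable at $f$, by exhibiting a direction $h$ with $d_{+}\varphi(f,h)\neq d_{-}\varphi(f,h)$, exactly as in the $\ell^{\infty}(\mathbb{R})$ and $\mathcal{C}(T)$ arguments. If $|f|$ attains $\varphi(f)$ at two distinct points (or at a single point with a sign change nearby, or with the value also approached at a separate location, or if $f$ fails to be continuous so that the essential supremum is not concentrated at a point), I choose $h$ equal to $+sig$ on a small set around the first location and to $-sig$ around the second; then $\varphi(f+th)\geq\varphi(f)+|t|$ while the local estimate above forces $\varphi(f+th)=\varphi(f)+|t|+o(t)$, so $d_{+}\varphi(f,h)=1\neq-1=d_{-}\varphi(f,h)$. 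In $L^{\infty}(\mathbb{R})$ indicator functions of disjoint measurable sets already supply such an $h$, so no Urysohn-type construction is needed.

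The step I expect to be the main obstacle is the essential-supremum bookkeeping, which is also why the statement must be read with some care. Since elements of $L^{\infty}(\mathbb{R})$ are equivalence classes, $\varphi(f+th)$ is determined by $f+th$ only up to null sets, so controlling it near $x_{0}$ genuinely uses that $f$ has a continuous representative there, and the ``derivative'' $h\mapsto sig(f(x_{0}))h(x_{0})$ must be understood on continuous directions (it is not a bona fide element of $(L^{\infty})^{*}$). Relatedly, on the non-compact line one must exclude the possibility that $\sup|f|$ is approached off every neighbourhood of $x_{0}$; once that is granted, which is automatic on a bounded interval, the two-sided estimate closes and both inclusions follow.
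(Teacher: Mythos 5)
Your proposal follows essentially the same route as the paper: for $f\in B$ you localize to a neighbourhood of the unique maximiser using a gap $|f(x)|\leq\varphi(f)-\eta$ off that neighbourhood and read off the derivative $sig(f(x_{0}))h(x_{0})$, and for $f\notin B$ you build a direction $h$ taking opposite signs near two maximising locations so that the one-sided derivatives are $+1$ and $-1$; this is exactly the paper's argument. The two caveats you flag are real and are in fact silently assumed in the paper's own proof: the paper writes $|f(x)|<|f(x_{0})|-\epsilon$ as if it followed from uniqueness of the maximiser (it does not on the non-compact line), and it evaluates $\varphi(f+h)$ as $|f(x_{0})+h(x_{0})|$ for a general $h\in L^{\infty}(\mathbb{R})$, which only makes sense for directions $h$ with a representative continuous at $x_{0}$, since $h(x_{0})$ is not well defined on equivalence classes; so your more cautious reading is the correct one.
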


\begin{proof}	
{(1)} Suppose that $f$ reaches the supremum only in $x_{0}\in\mathbb{R}$, then for $\varphi(h)<\delta$, where $\delta=\epsilon/ 2$, for $|f(x)|<|f(x_{0})|-\epsilon$ you have to $\varphi(f+h)=\inf\{M:|f(x)+h(x)|\leq M\}=|f(x_{0})+h(x_{0})|$ and $sig(f(x_{0})+h(x_{0}))=sig(f(x_{0}))$. Thus,{\setlength\arraycolsep{0pt} \begin{eqnarray}
	&\varphi&(f+h)-\varphi(f)-sig(f(x_{0}))h(x_{0})\nonumber\\&=& |f(x_{0})+h(x_{0})|-|f(x_{0})|-sig(f(x_{0}))h(x_{0})\nonumber\\&=&sig(f(x_{0})+h(x_{0}))(f(x_{0})+h(x_{0}))-sig(f(x_{0}))(f(x_{0}))-sig(f(x_{0}))h(x_{0})\nonumber\\&=&sig(f(x_{0}))(f(x_{0})+h(x_{0}))-sig(f(x_{0}))(f(x_{0}))-sig(f(x_{0}))h(x_{0})\nonumber\\&=&sig(f(x_{0}))(f(x_{0})+h(x_{0})-f(x_{0})-h(x_{0}))=0,\nonumber 
	\end{eqnarray}}showing that $\varphi$ is $F$-differentiable over $B$, which immediately implies the $G$-differen\-tiability of $\varphi$ over $B$.\\
	
Now it will be shown that $\varphi$ is not differentiable on $B^{c}$. Let $f\in B^{c}$ such that $\varphi(f)=w$ and suppose that it has two maximum $x_{0}$ and $x_{1}$, such that $f(x_{0})=f(x_{1})=w$, $x_{1}>x_{2}$ and $\epsilon=|x_{1}-x_{0}|$. If you define $h\in L ^{\infty}(\mathbb{R})$ as
	\begin{center}
	\begin{tabular}{lll}
	$h(x)= \left\{ \begin{array}{lll}
	-1 & \textrm{if $x\in (-\infty, x_{0}+\epsilon/2)$} \\
	\ \ 1 & \textrm{if $x\in [x_{0}+\epsilon/2),\infty)$}.
	 \end{array} \right.$
	\end{tabular} 
	\end{center}
Then $$\inf\{M:|f(x)+th(x)|\leq M,\quad x\in (-\infty, x_{0}+\epsilon/2)\}=w-t$$ $$\inf\{M:|f(x)+th(x)|\leq M,\quad x\in [x_{0}+\epsilon/2,\infty)\}=w+t.$$
Therefore, $\varphi(f + th)=w-t$ if $t> 0$ and $\varphi(f + th)=w+t$ if $t<0$. Consequently
	\begin{center}
	\begin{tabular}{lll}
	$\varphi(f+th)-\varphi(f)= \left\{ \begin{array}{lll}
	-t & \textrm{if $t>0$} \\
	\ \ t & \textrm{if $t<0$}.
	 \end{array} \right.$
	\end{tabular} 
	\end{center}
Hence $\lim_{t\rightarrow 0^{+}} \frac{\varphi(f + th)-\varphi(f)}{t}=-t\neq t=\lim_{t \rightarrow 0^{-}}\frac{\varphi(f+th)-\varphi(f)}{t}$, showing the desired conclusion.
\end{proof}

\section[$G$-derivative of locally Lipschitz functions]{$G$-derivative of locally Lipschitz functions}

In Phelps Theorem, a locally Lipschitz function is taken in a separable Banach space $X$. Now, it will be shown that this condition can also be considered for non-separable spaces, if the Banach space is assumed to be $X$ as a projective limit of finite Banach spaces in which the connecting functions meet the condition of being $H$-differentiable. We will begin with a preliminary proposition. 

\begin{prop}\label{composicion}
	Given a function $f:\Omega\subseteq X \rightarrow \mathbb{R}$ locally Lipschitz and not constant over any open $\Omega$. If $f$ is the composition of  two  functions $h:Y\rightarrow \mathbb{R}$ and $p:\Omega\rightarrow Y$, such that $f(x)=h(p(x))$. Then  $h$ is locally Lipschitz.   
\end{prop}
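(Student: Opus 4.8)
The plan is to exploit the structure of the factoring map $p$. In the projective‑limit setting of this section, $Y$ is one of the finite‑dimensional building blocks and $p$ is the restriction to $\Omega$ of the associated canonical projection $\bar p:X\to Y$, which is a continuous linear surjection; I will use this tacitly (without it the statement is false — see below). The key point is that such a $\bar p$ admits a bounded linear right inverse $s:Y\to X$ with $\bar p\circ s=\mathrm{id}_Y$: since $Y$ is finite‑dimensional, it suffices to pick, for each vector of a fixed basis of $Y$, a $\bar p$‑preimage in $X$ and extend linearly, boundedness being automatic on a finite‑dimensional domain. Moreover, since $\bar p$ is open (open mapping theorem) and $\Omega$ is open, the set $p(\Omega)=\bar p(\Omega)$ — on which $h$ is actually determined by $f$, via $h(p(x))=f(x)$ — is an open subset of $Y$.

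Now fix $y_0\in p(\Omega)$, choose $x_0\in\Omega$ with $p(x_0)=y_0$, and pick $r>0$ with $B(x_0,r)\subseteq\Omega$ and $f$ $L$‑Lipschitz on $B(x_0,r)$. For $y\in Y$ set $x_y:=x_0+s(y-y_0)$, so that $p(x_y)=y$ and $\|x_{y_1}-x_{y_2}\|=\|s(y_1-y_2)\|\le\|s\|\,\|y_1-y_2\|$; also $x_y\in B(x_0,r)$ whenever $\|y-y_0\|<r/(1+\|s\|)$. Hence, for $y_1,y_2$ in that ball around $y_0$,
\[
|h(y_1)-h(y_2)|=|f(x_{y_1})-f(x_{y_2})|\le L\,\|x_{y_1}-x_{y_2}\|\le L\,\|s\|\,\|y_1-y_2\|,
\]
so $h$ is Lipschitz with constant $L\|s\|$ on a neighbourhood of $y_0$. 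As $y_0\in p(\Omega)$ was arbitrary (and $p(\Omega)=Y$ when $\bar p$ is onto and $\Omega=X$, or $p(\Omega)=V$ when $\Omega$ is the cylinder $\bar p^{-1}(V)$), $h$ is locally Lipschitz on its domain. The hypothesis that $f$ is not constant on any open set plays only an auxiliary role here: it rules out degenerate factorisations (if $p$ were constant on an open set, $f$ would be too), so the argument is never vacuous.

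The step I expect to be the crux — and the only place the hypotheses are genuinely used — is the production of the bounded linear (equivalently, co‑Lipschitz) right inverse of $p$. For a merely $H$‑differentiable, or even homeomorphic, $p$ this fails: on $\mathbb{R}$, taking $p(t)=t^{3}$, $h(s)=s^{1/3}$ and $f=\mathrm{id}$, one obtains a nowhere‑constant Lipschitz $f=h\circ p$ while $h$ is not locally Lipschitz at $0$. So one must invoke precisely that $Y$ is a finite‑dimensional factor of the projective limit and $p$ the corresponding linear projection; finite‑dimensionality then yields $s$ and, via the open mapping theorem, the openness of $p(\Omega)$ needed for the domain bookkeeping. (If instead $Y$ is allowed to be an arbitrary Banach space with $\bar p$ a continuous linear surjection, the same argument works using the bounded, now possibly nonlinear, selection supplied by the open mapping theorem, chosen based at $x_{y_1}$ for each pair $y_1,y_2$.)
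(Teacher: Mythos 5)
Your argument is correct, but it proves a repaired version of the statement rather than the statement as written, and in doing so it exposes a genuine flaw in the paper's own proof. The paper argues by contradiction: it negates ``$h$ is Lipschitz'' as ``for all $k_{0}>0$ \emph{and all} $y\in B(x,r)$ one has $|h(p(x))-h(p(y))|>k_{0}\|p(x)-p(y)\|_{Y}$,'' deduces $\|p(x)-p(y)\|_{Y}<\tfrac{k}{k_{0}}\|x-y\|_{X}$ for every $k_{0}$, and concludes that $p$, hence $f$, is constant on $B(x,r)$, contradicting non-constancy. That negation is misquantified: failure of the Lipschitz condition only yields, for each $k_{0}$, \emph{some} pair of points violating the inequality, not all pairs, so the conclusion that $p$ is constant on a ball does not follow. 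Your cube-root example ($X=Y=\mathbb{R}$, $p(t)=t^{3}$, $h(s)=s^{1/3}$, $f=\mathrm{id}$) confirms that the gap is real: with no hypothesis on $p$ beyond the factorisation, the proposition is false, and the non-constancy assumption cannot save it.

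Your repair --- assuming, as in the only place the proposition is used (Theorem~\ref{extension1}), that $p$ is the restriction of a continuous linear surjection onto a finite-dimensional $Y$ --- is the natural one, and your direct construction is sound: a bounded linear right inverse $s$ of $\bar p$ exists because $Y$ is finite-dimensional, the affine section $y\mapsto x_{0}+s(y-y_{0})$ carries a small ball around $y_{0}$ into the ball $B(x_{0},r)$ on which $f$ is $L$-Lipschitz, and the identity $h(y)=f(x_{y})$ then gives the constant $L\|s\|$. Two remarks. First, your proof correctly discards the non-constancy hypothesis, which plays no role once $p$ is a linear projection. Second, your conclusion is that $h$ is locally Lipschitz on the open set $p(\Omega)$ only; this is all the application needs (and all that can be true, since $f$ does not constrain $h$ off $p(\Omega)$), but it is formally weaker than the paper's claim about $h$ on all of $Y$.
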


\begin{proof}
Since $f$ is locally Lipschitz on $x\in\Omega$, then there exists $B(x,r)$, $k>0$ such that for $y\in B(x,r)$ is met:
$$|f(x)-f(y)|=|h(p(x))-h(p(y))|\leq k\|x-y\|_{X},$$       
if we assume $h$ is not Lipschitz, then for all $k_{0}>0$ and all $y\in B(x,r)$ you have:
$$|h(p(x))-h(p(y))|>k_{0}\|p(x)-p(y)\|_{Y},$$ 
then using the first inequality
$$\|p(x)-p(y)\|_{Y}<\frac{k}{k_{0}}\|x-y\|_{X}\leq \frac{k}{k_{0}}r,$$
this for all $k_{0}>0$, from which it is concluded that $ p(x)=p(y)$ and in turn that $f(x)=f(y)$ for all $y\in B(x,r)\subseteq\Omega$, which contradicts the hypothesis in which $f$ is taken as a non-constant function over any open $\Omega$.
\end{proof}

Now some definitions that will be required are presented.  

\begin{defn} Let $(T;\preccurlyeq)$ be a directed set and $s,t\in T$
	\begin{enumerate}
		\item The family $(X_{t})_{t\in T}$ of topological spaces is {\it projective} if for each pair $(s,t)$ in $T\times T$ with $s\preccurlyeq t$ there exists a continuous function $p_ {st}:X_ {t}\rightarrow X_{s}$ ({\it connecting function}) such that $p_{st}\circ p_{tw}=p_{sw}$ if $s\preccurlyeq t\preccurlyeq w$.
		
		\item {\it The projective limit} $X_{T}$ of a projective family $(X_{t})_{t\in T}$ of finite dimension topological spaces, is the set of $x=(x_{t})_{t \in T}\in \prod_{t\in T}X_{t}=X_{T}$ such that there is a connecting function $p_{st}(\cdot)$ that meets $x_{s}=p_{st}(x_{t})$ for all $s\preceq t$, in particular to the connecting function $p_{s}(\cdot):X_{T}\rightarrow X_{s}$ is called {\it sth projection}.
		
		\item Given $(X_{t})_{t\in T}$ a projective family, $C_{T}$ the set of all continuous functions of $X_{T}$ in $\mathbb{R}$ and $C_{t}$ the set of all continuous functions of $X_{t}$ in $\mathbb{R}$. {\it A function $f\in C_{T}$ is cylindrical} if there exists a $t\in T$ and $h \in C_{t}$ such that $f=h\circ p_{t}$.
	\end{enumerate}
\end{defn}

\begin{ex}
	If $T$ is the set of all the subspaces of finite dimension of a topological space $X$ ordered by inclusion. For each $F\in T$, the projection of $X$ over $F$ will be denoted by $p_{F}$. If $F,G\in T$ and $F\subseteq G$ It will be denoted by $p_{FG}$ to the orthogonal projection of $G$ over $F$, It is immediate that if $F\subseteq G$ then $p_{F}=p_{FG}\circ p_{G}$. Therefore $(F)_{F\in T}$ is a projective family of topological spaces called {\it projective family of the subspaces of finite dimension}. Now, for every $F\in T$ and every $\mu_{F}$ a Borelian measure in $F$, it will be said that a family $(\mu_{F})_{F\in T}$ is projective if for every $F\subseteq G$ it is satisfied that $\mu_{F}=p_{FG}(\mu_{G})$. In addition, every measure in $bor(X)$ generates a projective family $(\mu_{F})_{F\in\mathcal{F}}$ of borelian measures, where $\mu_{F}=p_{F}(\mu)$ and a function $f:X\rightarrow\mathbb{R}$ is cylindrical if there is a finite dimension subspace $F$ and a function $f_{F}:F\rightarrow \mathbb{R}$ such that $f=f_{F}\circ p_{F}$. 
\end{ex}

\begin{rem}\label{composicion2}\ 
	\begin{enumerate}
		\item Given $F\subset X$ a subspace of finite dimension. A cylindrical function $f=f_{F}\circ p_{F}$ is a borelian function if and only if $f_{F}$ is a borelian function.
		
		\item Given $t\preceq s$ y $f\in Cil(X_{T})\equiv {C_{T}}$ a cylindrical function, it is true that for $m_{t}\in Cil(X_ {t})\equiv C_{t}$ and $m_{s}\in Cil(X_{s})\equiv C_{s}$ such that
		$$f=m_{t}\circ p_{t}=m_{s}\circ p_{s}$$
		
		\item If $f$, $m_{t}$, $p_{t}$ are $H$-differentiable, then for $f=m_{t} \circ p_{t}$, the following applies:
		$$d_{H}f(x,h)=d_{H}[m_{t}(p_{t})(x,h)]=d_{H}m_{t}(p_{t}(x,h))d_{H}p_{t}(x,h),$$
		therefore, the existence of $d_{H} m_{t}(x_{t},h)$ implies the existence of $d_{H}f(x,h)$. 
	\end{enumerate}
\end{rem}

In the following theorem, the extension of the Phepls Theorem is made in the case of cylindrical functions and locally Lipschitz defined on a projective limit $X_{T}$, which will be assumed not separable.

\begin{thm}\label{extension1}
Let $X_{T} $ be a non-separable Banach space, which is the projective limit of the family of finite dimensional spaces $(X_{t})_{t\in T} $ with $t$-th projections $p_{t}$ and $\Omega \subseteq X_{T}$ an open. For a cylindrical function $f:\Omega \rightarrow \mathbb{R}$, $ f (x) = m_{t}\circ p_{t}(x)$, not constant over any open $\Omega$, locally Lipschitz, $\mu$ and $\mu_{t}=p_{t}(\mu)$ Gaussian measures defined on each $X_{T}$ and on $X_{t}$ respectively, is satisfied:
$$\mu(\{x\in\Omega:d_{G} f(x,h)\ does\ not\ exist\}\cap\Omega)=0.$$
\end{thm}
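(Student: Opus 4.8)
The plan is to transfer the problem down to the finite-dimensional factor $X_t$ through the projection $p_t$, apply the $H$-differentiable form of Phelps's Theorem (the Corollary above) there, and then push the resulting null set back up to $X_T$ using that $\mu_t=p_t(\mu)$.

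First I would use that $f=m_t\circ p_t$ is locally Lipschitz and, by hypothesis, not constant over any open subset of $\Omega$; Proposition~\ref{composicion}, applied with $h=m_t$ and $p=p_t$, then shows that $m_t$ is locally Lipschitz on $p_t(\Omega)$. Since $p_t$ is a continuous linear (hence, for the standard projective limit, open) map, $p_t(\Omega)$ is an open subset of the finite-dimensional — in particular separable — Banach space $X_t$, and $\mathbb{R}$ has the RNP. Hence the Corollary yields a Borel set $N_t\subseteq X_t$ with $\mu_t(N_t)=0$ such that $m_t$ is $H$-differentiable at every point of $p_t(\Omega)\setminus N_t$.

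Next I would observe that the projection $p_t\colon X_T\to X_t$, being continuous and linear, is $F$-differentiable — hence $H$-differentiable — at every point, with $d_Hp_t(x,h)=p_t(h)$. Therefore, by the chain rule for $H$-derivatives (Proposition~\ref{b1}(3), see also Remark~\ref{composicion2}(3)), for every $x\in\Omega$ with $p_t(x)\notin N_t$ the composition $f=m_t\circ p_t$ is $H$-differentiable at $x$, and \emph{a fortiori} $G$-differentiable at $x$, with $d_Gf(x,h)=d_Hm_t\big(p_t(x),p_t(h)\big)$. Consequently
\[
\{x\in\Omega : d_Gf(x,h)\ \text{does not exist}\}\ \subseteq\ p_t^{-1}(N_t)\cap\Omega .
\]
Finally, since $\mu_t=p_t(\mu)$ is the push-forward of $\mu$ under $p_t$, one has $\mu\big(p_t^{-1}(N_t)\big)=\mu_t(N_t)=0$, so the exceptional set is $\mu$-null, which is exactly the claimed identity.

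The main obstacle — and the reason the non-constancy hypothesis is imposed — is the passage from ``$f$ locally Lipschitz'' to ``$m_t$ locally Lipschitz'': without it Proposition~\ref{composicion} fails and the reduction to $X_t$ collapses. A secondary technical point is ensuring that $N_t$ can be taken Borel (so that $p_t^{-1}(N_t)$ is $\mu$-measurable and the push-forward identity $\mu(p_t^{-1}(N_t))=\mu_t(N_t)$ applies verbatim) and that $p_t(\Omega)$ is genuinely open in $X_t$ (true when the projections of the projective limit are surjective, as in the motivating example of finite-dimensional subspaces); both are routine once the projective limit structure is spelled out, and the remainder of the argument is mere bookkeeping with the chain rule.
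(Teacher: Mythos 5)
Your proposal is correct and follows essentially the same route as the paper: reduce to $m_{t}$ via Proposition~\ref{composicion}, apply Phelps's theorem on the finite-dimensional factor $X_{t}$, and pull the resulting null set back through $p_{t}$ using $\mu_{t}=p_{t}(\mu)$. The only (harmless) difference is that you justify the transfer by the chain rule for $H$-derivatives and a one-sided inclusion of exceptional sets, whereas the paper asserts that the exceptional sets correspond exactly under $p_{t}^{-1}$; your version is the more careful one, since only the inclusion is actually needed.
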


\begin{proof}
Since $f(x)=m_{t}\circ p_{t}(x)$, by Proposition \ref{composicion} we get that $m_{t}$ is locally Lipschitz. Since $X_{t}$ is of finite dimension, $\mathbb{R}$ has the RNP and $m_{t}: X_{t}\rightarrow\mathbb{R}$, then from  Phelps Theorem applied to each $m_{t}$ it is true that $$\mu_{t}(\{x_{t}\in\Omega_{t}:d_{G}m_{t}(x_{t},h)\ does\ not\ exist \})=0,$$ where $p_{t}(\Omega)=\Omega_{t}$, since $p_{t}(\cdot)$ is continuous over $\Omega$ and therefore measurable. Hence
{\setlength\arraycolsep{0pt}	 
\begin{eqnarray}
\mu&(&\{x\in\Omega: d_{G}f(x,h)\ does\ not\ exist\})\nonumber\\&=&\mu(p_{t}^{-1}(\{x_{t}\in\Omega_{t}: d_{G}m_{t}(x_{t},h)\ does\ not\ exist\}))\nonumber\\&=&\mu_{t}(\{x_{t}\in\Omega_{t}: d_{G}m_{t}(x_{t},h)\ does\ not\ exist\}).\nonumber
\end{eqnarray}}Then $\mu(\{x\in\Omega:d_{G}f(x,h)\ does\ not\ exist\})=0$.
\end{proof} 

\begin{rem} If we define $f=m_{t}\circ p_{t}(x)$ over $C_{T}(Z) $ ($C_ {T}(Z)$ the set of continuous functions of $X_{T}$ in $Z$), $m_{t}$ over $C_{t}(Z)$ ($C_{t}(Z)$ the set of continuous functions of $X_{t}$ in $Z$), for $Z$ a Banach space with the RNP, if the demonstration of the Proposition~\ref{composicion} from $\mathbb{R}$ to $Z$ is extended and if the remaining conditions of the previous theorem are assumed. Then we can demonstrate in the same way to the above that $$\mu(\{x\in\Omega: \partial_{G} f(x, h) \ does\ not\ exist\}) = 0.$$
\end{rem}

In the following example we will show a Lipschitz function defined on a Banach space, which satisfies the hypothesis of the previous theorem.

\begin{ex}\label{ex1}
Since $\ell^{\infty}(\mathbb{R})$, with the norm $\|x\|_ {\infty}=\sup_{k\in\mathbb{N}}|x_{k}|$, can be defined as the projective limit of finite spaces $(\mathbb{R}^{n})_ {n\in\mathbb{N}}$. If we define $f:\ell^{\infty}(\mathbb{R})\rightarrow[0,\infty)$, as the convergent series
$$f(x)=f(x_{1},\cdots,x_{n},\cdots)=\sum_{k=1}^{\infty}\frac{|x_{k}|}{k^{2}},$$
this function is continuous, just show that it is Lipshitz in $\ell ^{\infty}(\mathbb {R})$ and in effect, for $x, y \in \ell^{\infty} (\mathbb{R})$ is satisfied that	
$$\sum_{k=1}^{\infty}\frac{|x_{k}-y_{k}|}{k^{2}}\leq\sum_{k=1}^{\infty}\frac{|x_{k}|}{k^{2}}+\sum_{k=1}^{\infty}\frac{|y_{k}|}{k^{2}},$$
thus $\sum_{k = 1}^{\infty}\frac{|x_{k}-y_{k}|}{k^{2}}$ converge and as $|x_{k}-y_{k}| \leq \|xy\|_{\infty}$ for all $k\in \mathbb{N}$, then
\begin{eqnarray}
|f(x)-f(y)|&=&\left|\sum_{k=1}^{\infty}\frac{|x_{k}|-|y_{k}|}{k^{2}}\right|\leq\sum_{k=1}^{\infty}\frac{|x_{k}-y_{k}|}{k^{2}}\leq\sum_{k=1}^{\infty}\frac{\|x-y\|_{\infty}}{k^{2}}\nonumber\\&\leq&\|x-y\|_{\infty}\sum_{k=1}^{\infty}\frac{1}{k^{2}}\leq\|x-y\|_{\infty}.\nonumber
\end{eqnarray}
Therefore, there exists a $k_{0}=1$ such that for all $x,y\in\ell^{\infty}(\mathbb{R})$, it is satisfied $$|f(x)-f(y)|\leq k_{0}\|x-y\|_{\infty}.$$ 

On the other hand, for $x\in\ell^{\infty}(\mathbb{R})$ we can say that $p_{t}:\ell^{\infty}(\mathbb{R})\rightarrow\mathbb{R}^{t}$ defined as $p_{t}(x)=(x_{1},\cdots,x_{t})$ and $m_{t}:\mathbb{R}^{t} \rightarrow \mathbb{R}$ defined as $m_{t}(x_{1},\cdots,x_{t})=\sum_{k=1}^{t}\frac{|x_{k}|}{k^{2}}$ are continuous functions, since they are respectively satisfied:
$$\sum_{k=1}^{t}|x_{k}-y_{k}|\leq t\|x-y\|_{\infty} \quad y \quad \sum_{k=1}^{t}\frac{|x_{k}-y_{k}|}{k^{2}}\leq \sum_{k=1}^{t}|x_{k}-y_{k}|$$
that is, $p_{t}$ and $m_{t}$ are Lipschitz functions.
	
Also, if $x=(x_{k})_{k\in\mathbb{N}}\neq 0_{\ell^{\infty}(\mathbb{R})}$ for $t\neq 0$ small enough that $sig(x_{k}+ th_{k})=sig(x_{k})$, we obtain: $$\frac{f(x+th)-f(x)}{t}=\sum_{k=1}^{\infty}\left(\frac{|x_{k}+th_{k}|-|x_{k}|}{tk^{2}}\right)=\sum_{k=1}^{\infty}sig(x_{k})\frac{h_{k}}{k^{2}}.$$  
    Hence it is clear that for all $h\in\ell^{\infty}(\mathbb{R})$ 
	$$d_{G}f(x,h)=\sum_{k=1}^{\infty}sig(x_{k})\frac{h_{k}}{k^{2}}.$$
	Now, if $x=0_{\ell^{\infty}(\mathbb{R})}$, then 
	$$\frac{f(0+th)-f(0)}{t}=\frac{|t|}{t}\sum_{k=1}^{\infty}\frac{|h_{k}|}{k^{2}}.$$
	Then there is not $d_{G} f(0,h)$. Finally, if one defines $m_{t}(x)=\sum_{k=1}^{t}\frac{|x_{k}|}{k^{2}}$, one obtains $$d_{G}m_{t}(x,h)=\sum_{k=1}^{t}sig(x_{k})\frac{h_{k}}{k^{2}}$$ 
	and as in the previous case $d_{G} m_{t}(x,h)$ does not exist in $(0_{1},\cdots, 0_{t})$.
\end{ex}

Now, some extensions of the previous theorem are proposed, in which different arrival spaces are established to $\mathbb{R}$.

\begin{prop}
		Under the  same assumptions of Theorem~\ref{extension1}, adding the continuity of $f$  and taking $g$ a Lipschitz function of $\mathbb{R}$ in $\mathbb{R}^{n}$. Then for $\mathcal{H}=g\circ f$, $f(x)=m_{t}\circ p_{t}(x)$, $\mathcal{H}$ is $G$-differentiable except in a  null Gaussian set  with respect to the open $\Omega$ in the aforementioned theorem. 
	\end{prop}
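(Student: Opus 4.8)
The plan is to observe that $\mathcal{H}$ is again a cylindrical locally Lipschitz function, now with values in $\mathbb{R}^{n}$, and to run the argument of Theorem~\ref{extension1} (in the $\mathrm{RNP}$-valued form of the Remark following it) on this new function. Writing $\mathcal{H}=g\circ f=g\circ(m_{t}\circ p_{t})=(g\circ m_{t})\circ p_{t}$ and setting $M_{t}:=g\circ m_{t}:X_{t}\to\mathbb{R}^{n}$, I would first record that $M_{t}$ is locally Lipschitz: by Proposition~\ref{composicion}, applied to the locally Lipschitz function $f=m_{t}\circ p_{t}$, which is not constant on any nonempty open subset of $\Omega$, the factor $m_{t}$ is locally Lipschitz on $X_{t}$, and the composition of the Lipschitz map $g$ with the locally Lipschitz map $m_{t}$ is locally Lipschitz. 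Hence $\mathcal{H}=M_{t}\circ p_{t}$ is a cylindrical locally Lipschitz function from $X_{T}$ into $\mathbb{R}^{n}$, a space with the RNP.

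Next I would apply the Phelps Theorem on the finite dimensional, hence separable, space $X_{t}$: since $M_{t}$ is locally Lipschitz and $\mathbb{R}^{n}$ has the RNP, $M_{t}$ is $G$-differentiable off a set $N_{t}\subseteq X_{t}$ with $\mu_{t}(N_{t})=0$, and, being locally Lipschitz, it is in fact $H$-differentiable on $X_{t}\setminus N_{t}$ by part (1) of Proposition~\ref{b1}. The projection $p_{t}:X_{T}\to X_{t}$ is continuous and linear, hence $F$-differentiable at every point with $dp_{t}(x)=p_{t}$, in particular $H$-differentiable everywhere. Applying the chain rule for the Hadamard derivative, part (3) of Proposition~\ref{b1}, to $\mathcal{H}=M_{t}\circ p_{t}$, I get that $\mathcal{H}$ is $H$-differentiable, and so $G$-differentiable, at every $x\in\Omega$ with $p_{t}(x)\notin N_{t}$; therefore
$$\{x\in\Omega : d_{G}\mathcal{H}(x,h)\ \text{does not exist}\}\cap\Omega\ \subseteq\ p_{t}^{-1}(N_{t}).$$
Then, exactly as in the proof of Theorem~\ref{extension1}, $p_{t}$ is continuous, hence Borel measurable, and by projectivity $\mu_{t}=p_{t}(\mu)$, whence $\mu\big(p_{t}^{-1}(N_{t})\big)=\mu_{t}(N_{t})=0$; this yields the desired conclusion $\mu(\{x\in\Omega : d_{G}\mathcal{H}(x,h)\ \text{does not exist}\}\cap\Omega)=0$.

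The hard part is not the bookkeeping above but the choice of which Phelps application to make. The naive route --- combining the $G$-differentiability of $f$ coming from Theorem~\ref{extension1} with Rademacher's theorem for $g$ via the chain rule --- breaks down, because the exceptional set of $g$ is only Lebesgue null in $\mathbb{R}$ and its preimage under $f$ need not be $\mu$-null: one has $f^{-1}(\text{bad})=p_{t}^{-1}\big(m_{t}^{-1}(\text{bad})\big)$, whose $\mu$-measure equals $\mu_{t}\big(m_{t}^{-1}(\text{bad})\big)$, and a Lipschitz $m_{t}$ can carry a positive-measure level set (for instance $m_{t}=\operatorname{dist}(\cdot,K)$ with $K$ a fat Cantor set). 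The argument circumvents this precisely by pushing the composition inside and applying Phelps directly to $g\circ m_{t}$ on the finite dimensional space $X_{t}$.

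Two minor points should be flagged, neither affecting the argument. One should read $\mu_{t}$ as a non-degenerate Gaussian measure on $X_{t}$, exactly as in Theorem~\ref{extension1}, so that the Phelps Theorem applies verbatim; and if $g$ (or $f$) happens to be constant, then $\mathcal{H}$ is constant and hence everywhere $G$-differentiable, so the ``nowhere locally constant'' hypothesis inherited from Theorem~\ref{extension1}, which is needed only to make $m_{t}$, and thus $M_{t}$, locally Lipschitz, costs nothing.
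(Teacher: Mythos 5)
Your proof is correct, but it does not follow the paper's route --- in fact the paper takes exactly the path you dismiss as naive. The paper's proof applies Theorem~\ref{extension1} to $f$, Rademacher's theorem to $g$, and the Hadamard chain rule, and then disposes of the remaining bad set by asserting that $\mu(\{x\in\Omega: d_{H}g(f(x),\tilde{h})\ \text{does not exist}\})$ equals $\lambda(\{t\in\mathbb{R}: d_{H}g(t,\tilde{h})\ \text{does not exist}\})=0$, justified only by ``the continuity of $f$.'' That identification of $\mu(f^{-1}(N))$ with $\lambda(N)$ is precisely the step you flag: continuity of $f$ yields measurability of $f^{-1}(N)$, not that the pushforward $f_{*}\mu$ is absolutely continuous with respect to Lebesgue measure, and your example $m_{t}=\operatorname{dist}(\cdot,K)$ with $K$ a fat Cantor set shows that a Lipschitz cylindrical $f$ can pull a Lebesgue-null set back to a set of positive $\mu_{t}$-measure, hence positive $\mu$-measure. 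Your alternative --- regroup as $\mathcal{H}=(g\circ m_{t})\circ p_{t}$, apply Phelps directly to the locally Lipschitz map $g\circ m_{t}:X_{t}\to\mathbb{R}^{n}$ (separable domain, RNP range), and push the exceptional set up through $p_{t}$ exactly as in the proof of Theorem~\ref{extension1} --- avoids the pushforward issue entirely, uses nothing about $g$ beyond being Lipschitz into an RNP space (so it also covers the paper's next proposition verbatim), and correctly isolates the only real hypotheses needed (non-degeneracy of $\mu_{t}$, and non-constancy of $f$ solely to make $m_{t}$ locally Lipschitz via Proposition~\ref{composicion}). In short, your decomposition is genuinely different from the paper's and repairs a real gap in its argument.
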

	
    \begin{proof} 
    	Using the Theorem~\ref{extension1}, for $\mu$ a Gaussian measure on $X_{T}$, it is true that $\mu(\{x\in\Omega:d_{H}f(x,h)\ does\ not\ exist\})=0$. On the other hand, from Rademacher Theorem it is true that $d_{H}g(t)$, $t\in\mathbb{R}$, exists except in a zero Gaussian measure set, in this case Lebesgue null of $\mathbb{R}$, therefore
	$$d_{H}\mathcal{H}(x,h)=d_{H}g(f(x),\tilde{h})\cdot d_{H}f(x,h),\quad \tilde{h}\in\mathbb{R}, $$  
	which,
	{\setlength\arraycolsep{0pt}
		\begin{eqnarray}
		&\mu&(\{x\in \Omega: d_{H}\mathcal{H}(x,h)\ does\ not\ exist\})\nonumber\\&=&\mu(\{x\in \Omega: d_{H}g(f(x),\tilde{h})\ does\ not\ exist\}\cup\{x\in \Omega: d_{H}f(x,h)\ does\ not\ exist\})\nonumber\\&\leq& \mu(\{x\in \Omega: d_{H}g(f(x),\tilde{h})\ does\ not\ exist\})+\mu(\{x\in \Omega: d_{H}f(x,h)\ does\ not\ exist\})\nonumber\\&=& \mu(\{x\in \Omega: d_{H}g(f(x),\tilde{h})\ does\ not\ exist\})+0\nonumber\\&=& \lambda(\{t\in \mathbb{R}: d_{H}g(t,\tilde{h})\ does\ not\ exist\})=0,\nonumber
		\end{eqnarray}}
	where the last equality is due to the continuity of $f$.
    \end{proof}

	\begin{prop}
		Under the same assumptions of Theorem~\ref{extension1}, adding $f$ continuous and taking $g$ a local Lipschitz function of $\mathbb{R}$ in a $Y$ space with the RNP. Then, for $\mathcal{H}=g\circ f$, $f(x)=m_{t}\circ p_{t}(x)$, $\mathcal{H}$ is $G$-differentiable except in a set of null Gaussian measure, with respect to the open $\Omega$ of the aforementioned theorem.
	\end{prop}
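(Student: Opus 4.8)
The proof will run exactly parallel to that of the preceding proposition, with the finite-dimensional target $\mathbb{R}^{n}$ replaced by $Y$ and the appeal to Rademacher's theorem replaced by an appeal to Phelps' theorem; the latter substitution is legitimate precisely because $\mathbb{R}$ is separable and $Y$ has the RNP, which is exactly what Phelps' theorem requires of the target space.

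First I would apply Theorem~\ref{extension1} to $f=m_{t}\circ p_{t}$: since $f$ is cylindrical, locally Lipschitz and non-constant on every open subset of $\Omega$, one gets $\mu(\{x\in\Omega:d_{G}f(x,h)\ \text{does not exist}\})=0$, and because $f$ is locally Lipschitz, Proposition~\ref{b1}(1) upgrades this to $\mu(\{x\in\Omega:d_{H}f(x,h)\ \text{does not exist}\})=0$, with $d_{H}f=d_{G}f$ there. Next I would apply Phelps' theorem to $g:\mathbb{R}\rightarrow Y$: there is a Lebesgue-null set $N\subseteq\mathbb{R}$ off which $g$ is $G$-differentiable, and since $g$ is locally Lipschitz, Proposition~\ref{b1}(1) again yields that $g$ is $H$-differentiable on $\mathbb{R}\setminus N$ with $d_{H}g(s,\tilde h)=d_{G}g(s,\tilde h)$.

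Now I would invoke the chain rule for Hadamard derivatives, Proposition~\ref{b1}(3): at every $x\in\Omega$ for which $d_{H}f(x,h)$ exists and $f(x)\notin N$, the composite $\mathcal{H}=g\circ f$ is $H$-differentiable, hence $G$-differentiable, with
$$d_{H}\mathcal{H}(x,h)=d_{H}g\bigl(f(x),\tilde h\bigr)\cdot d_{H}f(x,h),\qquad \tilde h\in\mathbb{R}.$$
Consequently the set on which $\mathcal{H}$ fails to be $G$-differentiable is contained in $\{x\in\Omega:d_{H}f(x,h)\ \text{does not exist}\}\cup f^{-1}(N)$. The first set is $\mu$-null by the step above; it remains to check that $\mu(f^{-1}(N))=0$. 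Writing $f_{\ast}\mu$ for the image measure, one has $\mu(f^{-1}(N))=(f_{\ast}\mu)(N)$, and since $N$ is Lebesgue-null this vanishes once one knows that $f_{\ast}\mu$ assigns measure zero to Lebesgue-null subsets of $\mathbb{R}$; this is established exactly as in the preceding proposition, using the continuity of $f$ together with the factorization $f=m_{t}\circ p_{t}$, the fact that $\mu_{t}=(p_{t})_{\ast}\mu$ is Gaussian on the finite-dimensional space $X_{t}$, and that $m_{t}$ is locally Lipschitz and non-constant on open sets. Combining the two null sets gives $\mu(\{x\in\Omega:d_{G}\mathcal{H}(x,h)\ \text{does not exist}\})=0$, as required.

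The main obstacle is the very last point: transporting the Lebesgue-null non-differentiability set $N$ of $g$ back through $f$ to a $\mu$-null subset of $\Omega$. Continuity of $f$ alone does not force $f^{-1}(N)$ to be null; what is genuinely needed is that $f_{\ast}\mu=(m_{t})_{\ast}\mu_{t}$ charges no Lebesgue-null set, i.e. that it is absolutely continuous with respect to Lebesgue measure on $\mathbb{R}$. This is where the structural hypotheses on $f$ (cylindrical, locally Lipschitz, non-constant on open sets, with $\mu_{t}$ Gaussian on a finite-dimensional space) must really be used, through an absolute-continuity/coarea-type argument for Lipschitz functions on $X_{t}$, and it is the step on which I would spend the most care; everything else is the formal bookkeeping already supplied by Proposition~\ref{b1} and Theorem~\ref{extension1}.
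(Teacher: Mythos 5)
Your proposal follows the same route as the paper: the paper's own proof of this proposition is a single sentence invoking Theorem~\ref{extension1}, Phelps' theorem, and ``a similar argument to the previous proposition,'' which is exactly the substitution you describe (Rademacher replaced by Phelps, $\mathbb{R}^{n}$ replaced by $Y$ with the RNP, and the chain rule for Hadamard derivatives from Proposition~\ref{b1} doing the bookkeeping). In that sense your decomposition of the non-differentiability set of $\mathcal{H}$ into $\{x : d_{H}f(x,h)\ \text{does not exist}\}\cup f^{-1}(N)$ is precisely what the paper intends.

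The obstacle you single out at the end is a genuine gap, and it is present in the paper as well: in the preceding proposition the paper justifies $\mu(\{x\in\Omega : d_{H}g(f(x),\tilde h)\ \text{does not exist}\})=\lambda(\{t\in\mathbb{R} : d_{H}g(t,\tilde h)\ \text{does not exist}\})=0$ with the phrase ``due to the continuity of $f$,'' and this is not a valid inference. Continuity of $f$ says nothing about whether $f^{-1}(N)$ is $\mu$-null for a Lebesgue-null $N$; what is needed is that the image measure $f_{\ast}\mu=(m_{t})_{\ast}\mu_{t}$ is absolutely continuous with respect to Lebesgue measure on $\mathbb{R}$, and a locally Lipschitz $m_{t}$ that is non-constant on open sets does not automatically push a Gaussian measure forward to an absolutely continuous one (consider, e.g., a Lipschitz function whose image has positive measure concentrated on a Cantor-type set of critical values). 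So your instinct to isolate this as the step requiring real work is correct, but neither your writeup nor the paper actually supplies the argument; as it stands the proof is incomplete at exactly the point you flagged, and closing it would require an additional hypothesis or a genuine coarea/Sard-type lemma for $(m_{t})_{\ast}\mu_{t}$.
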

	
	\begin{proof}
		Using the Theorem~\ref{extension1}, Phelps Theorem and applying similar argument to the previous proposition, the result is obtained.
	\end{proof}	 
	
	\begin{cor}
		Assuming the same hypotheses of the previous proposition, taking $Y=X'$ the dual space of a Banach space $X$ and if $\psi:X'\rightarrow X$ is $H$-differentiable. Then $X$ is Asplund and $\tau=\psi\circ\mathcal{H}: \Omega\subseteq X_{T} \rightarrow X$ is $H$-differentiable except for a null Gaussian set.   
	\end{cor}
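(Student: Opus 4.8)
The plan is to chain together the previous Proposition and the known equivalence "$E$ is $F$-Asplund iff $E'$ has the RNP" exactly as was done in Theorem~\ref{composicion00}. First I would invoke the previous proposition: under its hypotheses, $\mathcal{H} = g \circ f$ is $G$-differentiable, hence (being locally Lipschitz as a composition of a locally Lipschitz $g$ with the Lipschitz cylindrical $f$, cf. Proposition~\ref{composicion}) $H$-differentiable, on the complement of a null Gaussian set $N \subseteq \Omega$. So on $\Omega \setminus N$ the $H$-derivative $d_H \mathcal{H}(x,h)$ exists and takes values in $Y = X'$.

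Next I would observe that since $Y = X'$ has the RNP, the stated equivalence gives that $X$ is an Asplund space; this is the "then $X$ is Asplund" clause and requires no further work beyond citing the earlier remark that $E$ is $F$-Asplund $\iff$ $E'$ has the RNP. Then I would set $\tau = \psi \circ \mathcal{H} : \Omega \subseteq X_T \rightarrow X$ and apply the chain rule for the $H$-derivative from Proposition~\ref{b1}(3): since $\psi$ is $H$-differentiable on all of $X'$ and $\mathcal{H}$ is $H$-differentiable at every $x \in \Omega \setminus N$, the composite $\tau$ is $H$-differentiable at every such $x$, with
$$d_H \tau(x,h) = d_H \psi(\mathcal{H}(x), h') \circ d_H \mathcal{H}(x,h).$$
Finally, exactly as in Theorem~\ref{composicion00}, I would argue that the set where $d_H\tau$ fails to exist is contained in $N$: the existence of $d_H\mathcal{H}(x,h)$ forces the existence of $d_H\tau(x,h)$, so $\{x \in \Omega : d_H\tau(x,h) \text{ does not exist}\} \subseteq N$, whence its Gaussian measure is $0$.

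The only mildly delicate point — and thus the "main obstacle," though it is not a serious one — is bookkeeping the two layers of composition and confirming that local Lipschitzness propagates correctly: $f$ is Lipschitz, $g$ is only locally Lipschitz, so $\mathcal{H} = g\circ f$ is locally Lipschitz, and this is what upgrades its $G$-differentiability (supplied by the previous proposition) to $H$-differentiability via Proposition~\ref{b1}(1), which in turn is what makes the chain rule Proposition~\ref{b1}(3) applicable to $\tau = \psi \circ \mathcal{H}$. Since $\psi$ is assumed $H$-differentiable on all of $X'$, no exceptional set is introduced at that outer step, so the null set for $\tau$ is literally the one already produced for $\mathcal{H}$, and the measure-theoretic identity closing the argument is immediate.
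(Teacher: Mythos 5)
Your proposal is correct and follows essentially the same route as the paper: establish that $\mathcal{H}=g\circ f$ is locally Lipschitz, invoke the previous proposition to get $H$-differentiability of $\mathcal{H}$ off a Gaussian null set, and then repeat the chain-rule and measure bookkeeping of Theorem~\ref{composicion00} to transfer this to $\tau=\psi\circ\mathcal{H}$. Your explicit treatment of the ``$X$ is Asplund'' clause via the RNP equivalence and of the propagation of local Lipschitzness is a welcome filling-in of details the paper leaves implicit.
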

	
	\begin{proof}
		Since $\mathcal{H}=g\circ f$, for $f$ and $g$ locally Lipschitz, then $\mathcal{H}:\Omega\rightarrow X'$ is locally Lipschitz. Therefore, from what was done in the previous proposition and what is done in the Theorem~\ref{composicion00}, we conclude that $\tau=\psi\circ\mathcal{H}$ is $H$-differentiable except in a  set of null Gaussian measure.
	\end{proof}



\end{document}